\newtheorem{theorem}{Theorem}
\newtheorem{lemma}[theorem]{Lemma}
\newtheorem*{definition*}{Definition}
\newtheorem{claim}{Claim}
\newtheorem{conjecture}{Conjecture}
\newcommand{\floor}[1]{\left\lfloor{#1}\right\rfloor}
\newcommand{\ceil}[1]{\left\lceil{#1}\right\rceil}
\newcommand{\RNum}[1]{\uppercase\expandafter{\romannumeral #1\relax}}
\title{Stability version of Dirac's theorem and its applications for generalized Tur\'an problems}
\author[1,2]{Xiutao Zhu}
\author[1]{Ervin Győri}
\author[1,3]{Zhen He}
\author[1,3]{Zequn Lv}
\author[1]{Nika Salia}
\author[1,4]{Chuanqi Xiao}
\affil[1]{Alfr\'ed R\'enyi Institute of Mathematics, Hungarian Academy of Sciences. }
\affil[2]{Department of Mathematics, Nanjing University. }
\affil[3]{Department of Mathematical Sciences, Tsinghua University.}
\affil[4]{Central European University, Budapest.}
\date{}
\DeclareMathOperator{\ex}{ex}
\def\C{\mathcal C}
\def\F{\mathcal F}
\newcommand{\abs}[1]{\left\vert{#1}\right\vert}
\begin{document}
\maketitle 
\begin{abstract}
 In 1952, Dirac  proved that every $2$-connected $n$-vertex graph with the minimum degree $k+1$ contains a cycle of length at least $\min\{n, 2(k+1)\}$.  Here we obtain a stability version of this result by characterizing those graphs with minimum degree $k$ and circumference at most $2k+1$.

 We present applications of the above-stated result by obtaining  generalized Tur\'an numbers.  
 In particular, for all $\ell \geq 5$ we determine how many copies of a five-cycle as well as four-cycle are necessary to guarantee that the graph has  circumference larger than $\ell$. 
 In addition, we give a new proof of Luo's  Theorem for cliques using our stability result. 

\end{abstract}
\section{Introduction}

\subsection*{Circumference of graphs}
The problem of determining whether a graph contains a Hamiltonian cycle has been a fundamental question of graph theory. 
Deciding the Hamiltonicity for graphs is NP-complete.  
Therefore it is interesting to study sufficient conditions for Hamiltonicity.
The natural generalization of this problem is to find sufficient conditions for a given circumference which is the length of a longest cycle. 
In 1952 Dirac obtained a bound on the circumference of $2$-connected  graphs in terms of the minimum degree. 
Let us denote the circumference of a graph $G$ by $c(G)$.

\begin{theorem}(Dirac\cite{dirac1952some})\label{thm:Dirac}
Let $G$ be a $2$-connected $n$-vertex graph with minimum degree at least $k+1$,  then 
\[
c(G)\ge \min\{n, 2(k+1)\}.
\]
\end{theorem}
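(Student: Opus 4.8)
The plan is to argue by extremality on a longest cycle and use the minimum-degree hypothesis to force that cycle to be long, with $2$-connectivity used only to certify that short detours exist. Write $d=k+1$ for the minimum degree. If $G$ has a Hamilton cycle then $c(G)=n\ge\min\{n,2d\}$ and we are done, so I assume $c(G)<n$; it then suffices to produce a cycle of length at least $2d$. Fix a longest cycle $C$ with $|C|=c$, and pick a connected component $H$ of $G-V(C)$, which exists because $c<n$. Since $G$ is $2$-connected, $H$ has at least two neighbours on $C$; let $S\subseteq V(C)$ be the set of vertices of $C$ having a neighbour in $H$.

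The engine of the proof is a rerouting principle. If $a,b\in S$ are distinct and $W$ is any $a$--$b$ path whose interior lies entirely in $H$, then $a$ and $b$ split $C$ into two arcs, and replacing either arc by $W$ yields a cycle; maximality of $C$ forbids these from being longer than $C$, so each arc has length at least the length of $W$, whence $c\ge 2|W|$. The same idea applied to a single vertex $u\in H$ shows that between any two cyclically consecutive neighbours of $u$ on $C$ the corresponding arc has length at least $2$ (reroute through $u$). Thus a long detour through $H$, or a vertex of $H$ with many neighbours on $C$, both translate directly into a lower bound on $c$.

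To bring in the degree hypothesis, I consider a longest path $u_0u_1\cdots u_p$ inside the induced subgraph $G[V(H)]$. By maximality every neighbour of $u_0$ inside $H$ lies on this path, so $u_0$ has at most $p$ neighbours in $H$ and therefore at least $d-p$ neighbours on $C$; the same holds for $u_p$. I would then split into two regimes. In the first, $H$ admits an ear (a path between two distinct vertices of $S$ with interior in $H$) of length at least $d$; by the rerouting principle this already gives $c\ge 2d$. In the second, all such ears are short, and I would use the neighbour count above to locate a vertex of $H$ with many neighbours on $C$, or more generally to sum the attachment contributions of several vertices, so that the arcs they cut out—each of length at least $2$—again force $c\ge 2d$.

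The main obstacle is exactly the dichotomy in the last step: proving that a component whose ears are all short must distribute enough attachments around $C$, and doing the arc bookkeeping so that the individual length lower bounds sum around $C$ without overlap to reach $2d$ on the nose. This requires tracking the interplay between the length $p$ of the longest internal path and the number of attachments each endpoint contributes, and it is where the constant $2$ in $2(k+1)$ is genuinely used. By contrast, $2$-connectivity plays only a supporting role, guaranteeing $|S|\ge 2$ and legitimising every rerouting, so the quantitative strength of the bound comes from the minimum-degree condition rather than from connectivity.
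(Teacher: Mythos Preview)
The paper does not give a proof of this theorem; it is stated with a citation to Dirac~\cite{dirac1952some} and used as a black box in the proof of Theorem~\ref{thm:Main_Dirac}. So there is no in-paper argument to compare your attempt against.

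On its own merits, what you have written is a plan, not a proof: you yourself identify the second regime (all ears short, so one must sum arc contributions from many attachments) as ``the main obstacle'' and leave the bookkeeping undone. That gap is genuine but closable, and in fact the paper already records the tool that closes it, Lemma~\ref{lemma:H_C_t_broom}. With minimum degree $d=k+1$ one gets an $(H,C,t)$-broom with at least $d$ edges: a path $vPu_1$ of length $d-t+1$ through $H$ together with single edges $vu_2,\dots,vu_t$ to further distinct vertices of $C$. Your rerouting principle then forces the two arcs of $C$ meeting $u_1$ to have length at least $d-t+2$ each, and the remaining $t-2$ arcs to have length at least $2$ each, so
\[
c(G)\ \ge\ 2(d-t+2)+2(t-2)\ =\ 2d.
\]
Your longest-path-in-$H$ idea is essentially the seed of this construction: the endpoint $u_0$ with at most $p$ neighbours in $H$ and at least $d-p$ on $C$ is exactly the hub $v$ of such a broom. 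What is missing from your write-up is the step that turns the internal path and the $C$-attachments into a single object whose edge count is at least $d$, so that the two contributions combine additively rather than competing; without that packaging the ``sum the arc lower bounds to reach $2d$'' step does not go through.
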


Later in 1977, Kopylov obtained a similar bound on the circumference of $2$-connected  graphs in terms of the average degree. Let us denote the number of edges of a graph $G$ by $e(G)$.

\begin{theorem}(Kopylov\cite{kopylov1977maximal})\label{Thm:Kopylov}
Let $G$ be a $2$-connected $n$-vertex graph  with~$c(G) \leq \ell$ then 
\[
e(G)\leq  max\left\{\binom{\ell-1}{2}+2(n-\ell+1), \binom{\floor{\frac{\ell}{2}}}{2}+\floor{\frac{\ell}{2}} \left(n-\floor{\frac{\ell}{2}}\right)+\mathbbm{1}_{2|(\ell-1)}\right\}.
\]

\end{theorem}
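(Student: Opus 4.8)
The plan is to combine a clean minimum-degree reduction coming from Dirac's theorem (Theorem~\ref{thm:Dirac}) with a \emph{disintegration} argument in the spirit of Kopylov. Throughout set $t=\floor{\ell/2}$ and write $f_1=\binom{\ell-1}{2}+2(n-\ell+1)$ and $f_2=\binom{t}{2}+t(n-t)+\mathbbm{1}_{2\mid(\ell-1)}$ for the two competing terms. I would first restrict to the range $n>\ell$: for $n\le\ell$ the statement must be read with care, since already $K_\ell$ (which is $2$-connected with $c=\ell$) has more than $\max\{f_1,f_2\}$ edges, so the bound only becomes true once $n$ exceeds $\ell$. In this range $G$ cannot have large minimum degree: if $\delta(G)\ge t+1$, then since $G$ is $2$-connected Theorem~\ref{thm:Dirac} gives $c(G)\ge\min\{n,2(t+1)\}$, and because $n>\ell\ge 2t$ and $2(t+1)>\ell$ this minimum exceeds $\ell$, contradicting $c(G)\le\ell$. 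Hence $\delta(G)\le t$, so $G$ always contains a vertex eligible for deletion.

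Next I would run the $t$-disintegration of $G$: repeatedly delete a vertex of current degree at most $t$ until none remains, producing a (possibly empty) core $R$ with $\delta(R)\ge t+1$. Ordering the deleted vertices $v_1,\dots,v_{n-r}$ (where $r=\abs{V(R)}$) and letting $d_i$ be the degree of $v_i$ at the moment of its deletion, one has the exact identity
\[
e(G)=e(R)+\sum_{i} d_i,\qquad d_i\le t .
\]
If $R=\emptyset$ this already settles one case: using the sharper bound $d_i\le\min\{t,\,n-i\}$ gives $e(G)\le t(n-t)+\binom{t}{2}\le f_2$. So the empty-core case yields the $f_2$ bound essentially for free.

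The substantive case is $R\ne\emptyset$. Here $\delta(R)\ge t+1\ge 3$, so passing to an endblock $B_0$ of $R$ (which is necessarily $2$-connected, and in which all but at most one vertex retain degree $\ge t+1$) and applying Theorem~\ref{thm:Dirac} again gives $c(B_0)\ge\min\{\abs{B_0},2(t+1)\}$; combined with $c(B_0)\le\ell<2(t+1)$ this forces $\abs{B_0}\le\ell$. Thus the core is built from dense, near-complete blocks on at most $\ell$ vertices, and morally $R$ should be (close to) a single clique $K_{\ell-1}$, contributing the $\binom{\ell-1}{2}$ term. The hard part will be controlling how the deleted periphery attaches to this core, because the crude estimate $\sum_i d_i\le t(n-r)$ overshoots \emph{both} target terms by order $(t-2)n$, so bounding each $d_i$ by $t$ is hopeless. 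Instead I would argue that once a dense core survives, the circumference restriction forces the peripheral vertices to attach essentially like degree-$2$ vertices: any configuration contributing appreciably more than two edges per peripheral vertex can, via a Pósa-type rotation/insertion argument along a longest cycle, be rerouted into a cycle of length exceeding $\ell$. This is exactly what should collapse the attachment count to $2(n-\ell+1)$ and produce the first term $f_1$.

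I expect the main obstacle to be precisely this attachment analysis in the nonempty-core case: converting ``the core is a bounded near-clique and $G$ is $2$-connected with $c(G)\le\ell$'' into the sharp linear count $2(n-r)$, while simultaneously pinning down the exact clique size $\ell-1$ and the parity correction $\mathbbm{1}_{2\mid(\ell-1)}$, which encodes whether one extra chord inside the independent part can be added without creating a cycle of length $\ell+1$. Two secondary technical points I would still need to handle are that disintegration need not preserve $2$-connectivity — hence the reduction to an endblock $B_0$, where the single possible low-degree cut vertex must be accommodated in the application of Dirac's theorem — and the small-$n$ base cases. Once the attachment count is established, taking the larger of the two contributions yields $e(G)\le\max\{f_1,f_2\}$, as claimed.
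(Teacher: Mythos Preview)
The paper does not prove Theorem~\ref{Thm:Kopylov} at all: it is quoted as a known result of Kopylov~\cite{kopylov1977maximal} and is used only as a black box (in the proof of Lemma~\ref{lem3} and in Section~5). So there is no ``paper's own proof'' to compare your proposal against.

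As for your outline itself, the $t$-disintegration scheme you describe is exactly Kopylov's original strategy (and the one F\"uredi--Kostochka--Verstra\"ete refine in~\cite{furedi2016stability}), and your empty-core case is correct. However, by your own account the proposal is not a proof: the nonempty-core case is left at the level of heuristics. You correctly identify that bounding each deleted $d_i$ by $t$ overshoots both targets by $\Theta((t-2)n)$, but the suggested fix---``Pósa-type rotation/insertion forces peripheral vertices to attach like degree-$2$ vertices''---is precisely the substance of Kopylov's argument and is not carried out here. In particular you still need to (i) show the core is not merely a union of blocks of size $\le\ell$ but in fact embeds in a single $K_{\ell-1}$ (or the appropriate variant), and (ii) prove the sharp attachment bound, which does not follow from a generic rotation argument alone. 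Until those two steps are supplied, the nonempty-core branch remains a plan rather than a proof.
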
 

F\"{u}redi, Kostochka and Verstra\"{e}te \cite{furedi2016stability} obtained the stability result of Kopylov's Theorem. Before presenting the result we need to introduce a class of extremal graphs. 
Let $K_k$ be the  clique of $k$ vertices and $I_k$ be the independent set of $k$ vertices. For a positive integer $a$, let $aK_k$ be the graph consisting of~$a$ disjoint cliques of order $k$.
For graphs $G$ and $H$, we denote by $G \cup H$ the disjoint union of graphs $G$ and $H$. 
We denote by $G+H$ the join of $G$ and $H$, that is the graph obtained by connecting each pair of vertices between a vertex disjoint copies of $G$ and $H$. 
For example $K_k+I_{n-k}$  has minimum degree $k$  and circumference is $2k$ for $n\geq 2k$. 
For a set of vertices $A\subseteq V(G)$, let $G-A$ be the induced subgraph of $G$ on the vertex set $V(G)\setminus A$, i.e. $G-A=G[V(G)\setminus A]$.

\noindent \textbf{Introduction  of some classes of extremal graphs.}  We denote the graph $K_k+I_{n-k}$ by $H(n,2k)$ and let $H(n,2k+1)$ be a graph obtained from $H(n,2k)$ by adding an additional edge incident to two vertices of the independent set $I_{n-k}$. 


Here we define a class of graphs $\mathcal{H}_{1,n,k}$ for all integers $k$ and $n$ such that $n=b(k-1)+3$ for some positive integer $b$. Let~$b=b_1+b_2$ for some non-negative integers $b_1$ and $b_2$. Then let $G_0$ be the graph $\left((b_1K_{k-1}+\{u_1\})\cup(b_2K_{k-1}+\{u_2\})\right)+\{u\}$. Let $G$ be the graph obtained from $G_0$ by adding the edge $u_1u_2$, $G_1$ be the graph obtained from $G$ by removing the edge $uu_1$, $G_2$ be the graph obtained from $G$ by removing the edge $uu_1$ and $G_3$ be the graph obtained from $G$ by removing edges $uu_1$ and $uu_2$. All such graphs $G,G_1,G_2$ and $G_3$ are from the class $\mathcal{H}_{1,n,k}$. Note that all graphs in $\mathcal{H}_{1,n,k}$ have circumference $2k+1$. 


For all integers $k$ and $n$ such that $n=b(k-1)+1$ for some positive integer $b$, let 
\[
\mathcal{H}_{2,n,k}=\{K_2+bK_{k-1}, \overline{K}_2+bK_{k-1}\}.
\] 
Note that, the graphs from $\mathcal{H}_{2,n,k}$ have circumference $2k$.

\begin{theorem}( F\"{u}redi, Kostochka, Verstra\"{e}te\cite{furedi2016stability})
Let $G$ be a $2$-connected $n$-vertex graph such that~$c(G)=\ell$ and $n\ge 3\floor{\ell/2}$, then eitehr  
\[e(G)< \binom{\ceil{\ell/2}+2}{2}+\left(\floor{\frac{\ell}{2}}-1\right)\left(n-\ceil{\frac{\ell}{2}}-2\right),
\]
or~$G\subseteq H(n,\ell)$ or $G-A$ is a star forest for some $A\subseteq V(G)$ of size at most $\frac{\ell}{2}$.
\end{theorem}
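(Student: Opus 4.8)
The plan is to rerun Kopylov's disintegration argument (the method behind Theorem~\ref{Thm:Kopylov}) while keeping track of exactly when the estimate is nearly tight, using Dirac's theorem (Theorem~\ref{thm:Dirac}) as the engine that confines the dense part of $G$ to a bounded vertex set. Throughout write $t=\floor{\ell/2}$, and assume $e(G)$ is at least the displayed threshold, since otherwise the first alternative holds and there is nothing to prove. I would first peel off low-degree vertices: repeatedly delete from the current graph a vertex of degree at most $t$, collecting the deleted vertices into a set $R$, until the surviving graph $H$ is either empty or has $\delta(H)\ge t+1$. Since each deletion destroys at most $t$ edges, $e(G)\le e(H)+t\abs{R}$, so the edge hypothesis forces $\abs{R}$ to be almost all of $V(G)$, with most deleted vertices having degree essentially $t$ at the moment of deletion; equivalently, outside a small, dense remnant $H$ the graph already looks like each vertex attached to about $t$ others.

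The key point is that the core $H$ cannot be both large and $2$-connected. Every non-cut-vertex of a block $B$ of $H$ keeps all of its neighbours inside $B$, so $\delta(B)\ge t+1$ away from the (at most one, in a leaf block) cut vertex; hence if $\abs{V(B)}\ge \ell+1$ then Dirac's theorem gives
\[
c(B)\ge \min\{\abs{V(B)},\,2(t+1)\}\ge \ell+1,
\]
contradicting $c(G)=\ell$. Handling the lone cut vertex of a leaf block is a minor technical point (one may restore $2$-connectivity by re-adding a single neighbour from the rest of $H$, or apply Dirac to that block after deleting the offending vertex). The upshot is that every block of $H$, and thus the entire dense part of $G$, spans at most $\ell$ vertices, while the remaining $n-O(\ell)$ vertices all entered $R$ with at most $t$ neighbours.

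It then remains to reconstruct $G$ by re-adding the vertices of $R$ in reverse order of deletion and to show that, under the edge hypothesis, only two attachment patterns survive: either the edges concentrate on a clique-like core of $t$ vertices below which the rest is independent, driving $G\subseteq H(n,\ell)$, or they sit on a bounded set $A$ with $\abs{A}\le t=\floor{\ell/2}$ outside which the graph spans only disjoint stars, giving that $G-A$ is a star forest. The main obstacle, and the genuinely new ingredient, is precisely this dichotomy, which is a \emph{stability form of Dirac's theorem}: a $2$-connected graph with minimum degree about $t$ and circumference $\ell$ is either the extremal join $H(n,\ell)$ or is sparse outside a set of at most $\floor{\ell/2}$ vertices. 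Establishing it needs a careful longest-cycle analysis—fix a longest cycle $C$ of length $\ell$ and study how the components of $G-V(C)$ attach to it, using that no outside vertex can see two consecutive vertices of $C$ and that the rerouting (Pósa-type rotation) arguments forbid crossing chords—in order to eliminate every intermediate, moderately dense configuration. This is where the bulk of the work lies, and it is the bookkeeping in this step that produces the exact constant $\floor{\ell/2}$ bounding $\abs{A}$ and the precise threshold appearing in the statement.
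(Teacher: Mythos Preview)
This theorem is not proved in the paper at all: it is quoted as background from F\"uredi, Kostochka and Verstra\"ete~\cite{furedi2016stability}, with no argument given. There is therefore no ``paper's own proof'' to compare your proposal against.

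Beyond that, your plan conflates two different results. What you call the ``genuinely new ingredient''---a stability form of Dirac's theorem asserting that a $2$-connected graph with minimum degree about $t$ and circumference $\ell$ is either $H(n,\ell)$ or sparse outside $\floor{\ell/2}$ vertices---is precisely Theorem~\ref{thm:Main_Dirac} of \emph{this} paper, not a lemma used to prove the F\"uredi--Kostochka--Verstra\"ete theorem. The logical order is the reverse of what you suggest: the cited stability-of-Kopylov theorem came first, and the present paper proves an independent stability-of-Dirac theorem by a direct longest-cycle/broom analysis (Lemma~\ref{lemma:H_C_t_broom} and Claim~\ref{Claim:H_1_structure}), not by invoking the edge-count result. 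So the final paragraph of your proposal is essentially a description of what Section~3 of the paper does for Theorem~\ref{thm:Main_Dirac}, transplanted onto the wrong statement.

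As a sketch of the actual F\"uredi--Kostochka--Verstra\"ete argument, your disintegration outline is in the right spirit (their proof also peels to a $(t+1)$-core and uses Kopylov's method), but the step ``every block of $H$ spans at most $\ell$ vertices, hence the dense part is $O(\ell)$'' is a gap: bounding each block does not bound $\abs{V(H)}$ without further work, and the reconstruction dichotomy you promise (``only two attachment patterns survive'') is asserted rather than argued. The published proof needs a more careful classification at that stage.
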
 

Recently, Ma and Ning also obtained more general stability-type
results of Kopylov’s Theorem in \cite{Majie}.
In this work we prove the following stability version of Dirac's theorem.

\begin{theorem}\label{thm:Main_Dirac}
Let $G$ be a $2$-connected graph of $n$ vertices with $n\ge c(G)+1$ and $\delta(G)= k$. Then either $c(G)\geq 2k+2$, or
\begin{itemize}
  \item $c(G)=2k+1$ and~$G\subseteq H(n,2k+1)$, or $G\in \mathcal{H}_{1,n,k}$, or $G\subseteq K_2+(K_k\cup \frac{n-k-2}{k-1}K_{k-1})$, or $k=4$ and $G\subseteq K_3+\frac{n-3}{2} K_2$, or $k=3$ and $G\subseteq K_2+(S_{n-3-2t}\cup tK_2)$.
  \item~$c(G)= 2k$ and~$G\subseteq H(n,2k)$ or $G\in  \mathcal{H}_{2,n,k}$.
  \end{itemize}
\end{theorem}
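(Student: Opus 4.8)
The plan is to first pin down the circumference to two borderline values and then extract a rigid structure from a longest cycle. Since $\delta(G)=k$, Dirac's theorem (Theorem~\ref{thm:Dirac} applied with parameter $k-1$, so that the minimum-degree hypothesis reads $\delta(G)\ge k$) yields $c(G)\ge\min\{n,2k\}$; as $n\ge c(G)+1$ forces $c(G)\le n-1<n$, the minimum cannot be $n$, so in fact $n\ge 2k$ and $c(G)\ge 2k$. Thus, outside the generic case $c(G)\ge 2k+2$, it remains to analyze $\ell:=c(G)\in\{2k,2k+1\}$. I fix a longest cycle $C=v_1v_2\cdots v_\ell v_1$; because $n\ge\ell+1$ the graph $R:=G-V(C)$ is nonempty, and $2$-connectivity guarantees that every component of $R$ has at least two neighbors on $C$.

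The engine of the argument is the classical rotation / crossing-chord lemma: if an internally disjoint path outside $C$ joins $v_i$ to $v_j$ and has $m$ internal vertices, then both arcs of $C$ between $v_i$ and $v_j$ have length at least $m+1$, since otherwise rerouting through the path closes a cycle longer than $C$. Two consequences drive everything. First, for a vertex $w$ that is isolated in $R$, all of its $\ge k$ neighbors lie on $C$ and are pairwise non-consecutive, whence $\ell\ge 2\,|N(w)\cap V(C)|\ge 2k$; combined with $\ell\le 2k+1$ this forces $\deg(w)=k$, $|N(w)\cap V(C)|=k$, and all gaps equal to $1$ except, when $\ell=2k+1$, a single gap of length $2$. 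Second, the successor set $\{v_{i+1}:v_i\in N(w)\cap V(C)\}$ together with $w$ is independent, because an edge between two successors again produces a cycle of length $\ell+1$. These already locate an independent set of size $k+1$ and a candidate ``core'' $A:=N(w)\cap V(C)$ of size $k$.

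From here I would run a case distinction on the components of $R$. If every component is a single vertex, a two-vertex rerouting argument shows that two external vertices cannot attach to complementary alternating classes of $C$ (otherwise both can be inserted simultaneously, yielding a cycle of length $\ell+2$); hence all external vertices share one common neighborhood $A$ of size $k$. Invoking the minimum-degree bound on the vertices off $A$ together with the successor-independence property, one checks that $V(G)\setminus A$ is an independent set, with at most one internal edge when $\ell=2k+1$, i.e.\ $G\subseteq H(n,2k)$, respectively $G\subseteq H(n,2k+1)$. If instead some component of $R$ contains an edge, the path version of the crossing lemma squeezes the attachment set onto $C$ down to essentially two vertices, which become the join-core $K_2$ or $\overline{K}_2$, while the components are pushed into cliques $K_{k-1}$ (with at most one $K_k$); tracking which join-edges may still be deleted without dropping $\delta(G)$ below $k$ or changing $c(G)$ then yields exactly $\mathcal{H}_{2,n,k}$, the graph $K_2+(K_k\cup\frac{n-k-2}{k-1}K_{k-1})$, and the family $\mathcal{H}_{1,n,k}$.

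I expect the main obstacle to be the exhaustiveness and tightness of this case analysis rather than any single inequality. In particular: proving that the core is forced to be either an independent-complement of size $k$ (the $H(n,\cdot)$ regime) or a two-vertex join $K_2/\overline{K}_2$, with nothing intermediate surviving; showing the external cliques have size exactly $k-1$ and attach in the prescribed ``friendship'' pattern defining $\mathcal{H}_{1,n,k}$; and disposing of the genuinely exceptional low-degree configurations, namely the star-forest behaviour $K_2+(S_{n-3-2t}\cup tK_2)$ for $k=3$ and the graph $K_3+\frac{n-3}{2}K_2$ for $k=4$, which escape the generic clique description precisely because a $K_{k-1}$ degenerates to an edge or a single vertex. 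A secondary technical point is independence from the choice of $C$: since the conclusion must hold regardless of which longest cycle is taken, I would fix $C$ to maximize a secondary parameter (for instance the number of chords, or the size of a largest component of $R$) or verify directly that the extracted core $A$ and the induced partition do not depend on the chosen longest cycle.
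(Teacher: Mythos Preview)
Your overall architecture---reduce to $c(G)\in\{2k,2k+1\}$ via Dirac, fix a longest cycle $C$, study components of $G-C$, and handle isolated external vertices by successor-independence---matches the paper's, and your treatment of the isolated-vertex case is essentially correct. The genuine gap is the non-trivial component case. You assert that ``the path version of the crossing lemma squeezes the attachment set onto $C$ down to essentially two vertices,'' but the rerouting lemma alone does not do this: a component $H$ can attach to many points of $C$ provided the connecting paths through $H$ are short (e.g., if several attachment points share a common neighbor in $H$), and the lemma says nothing about why $H$ must be a clique, why $|V(H)|=k-1$, or why \emph{every} vertex of $H$ hits the \emph{same} two points of $C$.

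The paper closes this with two tools you do not invoke. First, the Li--Ning broom lemma (Lemma~\ref{lemma:H_C_t_broom}) produces an $(H,C,t)$-broom with at least $k$ edges: a vertex $v\in H$ with $t-1$ single edges to $C$ plus one long path through $H$ to a $t$-th point; an arc-length count on $C$ then forces the broom to have exactly $k+1$ vertices and nearly determines every arc length. Second, to show $H$ has no vertices outside that long path, the paper applies the Erd\H{o}s--Gallai path lemma (Lemma~\ref{lemmma:Erdos_Gallai_min_degree_k_path}) inside a leftover subcomponent to build a longer replacement path, contradicting maximality of the broom. Only then does one obtain $H=K_{k-t+1}$ with full attachment to all $t$ points, and a final rerouting step gives $t\in\{2,k\}$ except in the sporadic case $k=4,\,t=3$ producing $K_3+\tfrac{n-3}{2}K_2$. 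Without the broom construction and the Erd\H{o}s--Gallai lemma, your sketch cannot exclude components with three or more attachment points or components strictly larger than $K_{k-1}$, and the exceptional $k=3,4$ outcomes will not fall out of the analysis.
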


This theorem seems to have many applications. With this new tool, it is possible to 
re-prove some classical results in graph theory. Even more with this theorem we determined generalized Tur\'an numbers of cycles.

\subsection*{Applications for Generalised Tur\'an numbers.}

A central topic of extremal combinatorics  is to investigate sufficient conditions for the appearance of a given cycle. In particular, it is popular to maximize the number of cycles of length~$\ell$ in graphs of given order without a cycle of length $k$ as a subgraph. 
 For given integers~$k>3$ and~$m$, Gishboliner  and  Shapira determined the order of magnitude of how many copies of $k$-cycle is enough to guarantee the appearance of a $m$-cycle. This problem was also settled independently in  \cite{gerbner2017generalized} for $k$ and~$m$ even. Maximizing the number of triangles in $k$-cycle free graphs is still not settled, since this number is closely related to Tur\'an number of even cycles see \cite{gyHori2012maximum}. 

While Erd\H{o}s was measuring how far are the triangle-free graphs from bipartite graphs, he naturally asked a question 
\textit{`What is the maximum number of pentagons in a triangle-free graph'~\cite{erdos1984some}}. This question was settled half a century later by  Grzesik~\cite{grzesik2012maximum}  and independently by Hatami, Hladk\'y, Kr\'al, Norine, Razborov~\cite{hatami2013number}, using flag algebras.  In 1991, Győri, Pach, Simonovits~\cite{gyori1991maximal}, defined the generalized Tur\'an number and obtained some results. In particular, they maximized copies of a bipartite graph with an almost one-factor in triangle-free graphs. While investigating pentagon-free $3$-uniform hypergraphs  Bollob\'as-Gy\H ori~\cite{bollobas2008pentagons} initiated the study of the converse of the problem of Erd\H{o}s. They asked the following question `What is the maximum number of triangles in a pentagon-free graph'. This problem is still open, for the improvements on the upper-bound see \cite{bollobas2008pentagons, ergemlidze2019note, ergemlidze2018triangles}.

Grzesik and  Kielak in   \cite{grzesik2022maximum} determined that every graph on $n$ vertices
without odd cycles of length less than $k$ contains at most $(n/k)^k$ cycles of length $k$ for all $k\geq 7$. This result is an extension of the previously mentioned problem of Erd\H{o}s~\cite{erdos1984some}. Erd{\H{o}}s and Gallai determined the maximum number of edges in a graph not containing long paths and cycles as well in \cite{erdHos1959maximal}. Luo\cite{luo2018maximum} extended this result by determining the maximum number of cliques in a graph with a given circumference. The generalized Tur\'an version of this problem for paths was studied in \cite{plpk}.


\noindent \textbf{Notations.}
The cycle of length $\ell$ is denoted by~$C_\ell$.~$\C_{\ge \ell}$ denotes the family of all cycles of length at least~$\ell$.  For an integer $n$, a graph~$H$ and a family of graphs $\F$, Alon and Shikhelman denoted  generalized Tur\'an number by $\ex(n,H,\F)$ in \cite{alon2016many, alon2018additive}. Where~$\ex(n,H,\F)$ denotes the maximum number of copies of $H$ as a subgraph in an $n$-vertex graph not containing $F$ as a subgraph for all $F\in \F$. When family $\F$ consists of a single graph $F$, i.e. $\F=\{F\}$  we write $\ex(n,H,F)$ instead of $\ex(n,H,\{F\})$. 

 For  graphs $G$ and $H$ let $H(G)$ be the number of copies of $H$ in $G$. For example the number of cycles of length $k$ in $G$ is denoted by $C_k(G)$. For a vertex $v$ in a graph~$G$, let $C_t(v)$ be the number of cycles of length $t$ containing the vertex $v$ in $G$. For $v\in V(G)$, we denote  the neighborhood of $v$ by $N(v)$. For a vertex $v$, the closed neighbourhood of it~$N(v) \cup \{v\}$ is denoted by~$N[v]$. 

\noindent \textbf{Generalized Tur\'an-type results.}
 In this paper, by applying Theorem \ref{thm:Main_Dirac}, we determine the maximum number of four-cycles and pentagons in graphs with bounded circumference. Even more we prove that the extremal graph is unique for large enough $n$. 
 
\begin{theorem}\label{thm:c_5}
For all integers  $\ell \geq 6$ and $n\ge 100 \ell^{3/2}$ we  have
\[
\ex(n, C_5, \C_{\ge \ell+1})=C_5(H(n,\ell)),
\]
and $H(n,\ell)$ is the unique extremal graph. 

For $\ell=5$ and $n\ge 200$, we  have 
\[
\ex(n, C_5, \C_{\ge \ell+1})=\floor{\frac{(n-3)^2}{2}},
\]
the extremal graph is  a member of the family $\mathcal{H}_{1,n,k}$ with parameters $\floor{\frac{n-3}{2}}, \ceil{\frac{n-3}{2}}$.
\end{theorem}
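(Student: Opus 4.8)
The plan is to reduce to the $2$-connected case and then read off structure from the stability result, Theorem~\ref{thm:Main_Dirac}. Since every $5$-cycle is $2$-connected, it lies inside a single block of $G$, so $C_5(G)=\sum_{B}C_5(B)$ over the blocks $B$ of $G$, and blocks on fewer than $5$ vertices contribute nothing. Writing $n_B=|V(B)|$, the inequality $\sum_B(n_B-1)\le n-1$ together with convexity and superadditivity of the block‑wise maximum (which is quadratic in $n_B$, hence satisfies $f(a)+f(b)\le f(a+b-1)$ for $a,b\ge 5$) lets me push all vertices into one block; so it suffices to bound $C_5$ for a $2$-connected graph on $m\le n$ vertices with $c(G)\le\ell$. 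Throughout set $k:=\floor{\ell/2}$. For such a $2$-connected $G$, since $c(G)\le\ell<n$ the graph is non‑Hamiltonian, so $\delta(G)<n/2$ and $n>2\delta(G)$; Dirac's theorem (Theorem~\ref{thm:Dirac}) then gives $c(G)\ge\min\{n,2\delta(G)\}=2\delta(G)$, whence $2\delta(G)\le\ell$ and $\delta(G)\le k$.

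The core is the case $\delta(G)=k$, where $c(G)\in\{2k,2k+1\}$ and Theorem~\ref{thm:Main_Dirac} applies directly, placing $G$ inside $H(n,\ell)$ or in one of the finitely many exceptional families. If $G\subseteq H(n,\ell)$, monotonicity of the subgraph count gives $C_5(G)\le C_5(H(n,\ell))$ with equality forcing $G=H(n,\ell)$, yielding both the bound and uniqueness for $\ell\ge 6$. For each remaining family I would compute $C_5$ and compare leading terms. The identity $C_5(H(n,2k))=12\binom{k}{5}+12(n-k)\binom{k}{4}+6\binom{n-k}{2}\binom{k}{3}$ has quadratic leading coefficient $3\binom{k}{3}$, arising from pentagons with two non‑adjacent independent‑set vertices and three clique vertices; the extra edge in $H(n,2k+1)$ only affects lower‑order terms. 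By contrast $K_2+bK_{k-1}$, $K_2+(K_k\cup\tfrac{n-k-2}{k-1}K_{k-1})$ and the $k\in\{3,4\}$ exceptions each have a strictly smaller quadratic coefficient, since their dominating set has only $2$ or $3$ vertices and the attached cliques are too small to produce the $\binom{k}{3}$ factor. Thus for $\ell\ge 6$ (so $k\ge 3$) every exceptional graph has fewer pentagons than $H(n,\ell)$ once $n$ is large. When $\ell=5$ we have $k=2$, so $2$-connectivity already forces $\delta(G)=2=k$ and the stability theorem always applies; here $H(n,5)$ has only $O(n)$ pentagons, whereas a balanced member of $\mathcal{H}_{1,n,k}$ is quadratic: its pentagons are exactly those of shape $u\,w_1\,u_1\,u_2\,w_2\,u$ with $w_1,w_2$ leaves of $u_1,u_2$, giving $b_1b_2$ copies, maximised when $\{b_1,b_2\}=\{\floor{\tfrac{n-3}{2}},\ceil{\tfrac{n-3}{2}}\}$; this identifies the extremal graph and value for $\ell=5$.

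It remains to treat $\delta(G)<k$, which occurs only for $\ell\ge 6$, by induction on $n$. Let $v$ have minimum degree $\delta$ and write $C_5(G)=C_5(G-v)+C_5(v)$, where $C_5(v)$ counts pentagons through $v$. Each such pentagon is $v\,x\,p\,q\,y\,v$ with $x,y\in N(v)$ and $xpqy$ a path of length three, so $C_5(v)=\sum_{\{x,y\}\subseteq N(v)}P_3(x,y)$, and $P_3(x,y)$ is bounded by the number of edges $pq$ with $p\in N(x)$, $q\in N(y)$. Since $c(G)\le\ell$, the Erd\H{o}s--Gallai/Kopylov edge bound gives $e(G)=O(\ell n)$, so $C_5(v)\le\binom{\delta}{2}\cdot O(\ell n)$. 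On the other hand the marginal gain $C_5(H(n,\ell))-C_5(H(n-1,\ell))$ equals $\big(6\binom{k}{3}+o(1)\big)n$. The strict inequality $\delta\le k-1$ is what should make the coefficient of $n$ in the estimate for $C_5(v)$ drop below $6\binom{k}{3}$, giving $C_5(v)<C_5(H(n,\ell))-C_5(H(n-1,\ell))$; combined with the inductive bound $C_5(G-v)\le C_5(H(n-1,\ell))$ this yields $C_5(G)<C_5(H(n,\ell))$, the case $\delta(G)=k$ above supplying the terminal case and base of the induction.

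The step I expect to be the main obstacle is precisely this degree‑drop estimate: counting the two orientations of the middle edge, the crude bound $C_5(v)\le\binom{\delta}{2}\cdot 2e(G)$ only \emph{ties} the marginal gain at leading order, so the comparison must be carried out with honest constants in both the Erd\H{o}s--Gallai count and in $P_3(x,y)$ — for instance by exploiting that the middle edge $pq$ of a counted path cannot sit too close to $v$ without producing a cycle longer than $\ell$. This sharpening is what forces the hypothesis $n\ge 100\ell^{3/2}$, which must also dominate the $\Theta(k^4)$ constant terms of $C_5(H(n,\ell))$. A secondary, purely computational difficulty is verifying the leading‑coefficient comparisons for the finitely many exceptional families, especially the $k\in\{3,4\}$ cases whose dominating set is a triangle and which do carry a genuine quadratic term that must still be shown to fall short of $3\binom{k}{3}$.
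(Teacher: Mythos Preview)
Your overall architecture matches the paper's: compare the exceptional families coming out of Theorem~\ref{thm:Main_Dirac}, and for the low--degree case run an induction (the paper frames it as ``progressive induction'' on $\phi(n)=\ex(n,C_5,\C_{\ge\ell+1})-C_5(H(n,\ell))$). Your treatment of $\ell=5$ and of the exceptional families is essentially the content of the paper's Lemma~\ref{Lemma_C_5_extremal_numbers}. The non--$2$--connected reduction is organised a bit differently in the paper (end--block analysis inside the inductive claim rather than a global superadditivity argument), but that is a minor point.

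The genuine gap is exactly the one you flag and do not close: your bound $C_5(v)\le\binom{\delta}{2}\cdot 2e(G)$ with $\delta\le k-1$ and Kopylov's $e(G)\le kn+O(k^2)$ gives leading term $k(k-1)(k-2)\,n$, which only \emph{ties} the increment $C_5(H(n,\ell))-C_5(H(n-1,\ell))$, so the induction stalls. The paper resolves this in Lemma~\ref{lem3} with a sharper bound $C_5(v)\le k(k-2)^2 n - \tfrac12 k^2(k-2)^2$, obtained as follows. Classify pentagons $vv_1v_2v_3v_4v$ through $v$ by the number of $\{v_2,v_3\}$ lying in $N(v)$. Types~1 and~2 are easily bounded by $(k-2)(k-3)$ times the relevant edge count. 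For Type~0 (the opposite edge $v_2v_3$ lies in $N_2(v):=V\setminus N[v]$) the multiplier is $|N(v_2)\cap N(v)|\cdot|N(v_3)\cap N(v)|$; if every vertex of $N_2(v)$ has at most $k-2$ neighbours in $N(v)$ this gives $(k-2)^2$ and one is done. Otherwise $d(v)=k-1$, and one lets $A\subseteq N_2(v)$ be the set of vertices with $\ge k-2$ neighbours in $N(v)$. The key structural point is that the circumference bound forces $G[A]$ to be extremely sparse: for $\ell=2k$ it is $P_4$--, $(P_3\cup P_2)$-- and $3P_2$--free (and analogously for $\ell=2k+1$), because any such configuration in $A$ together with $N(v)$ and $v$ would produce a cycle longer than $\ell$. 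Hence $e(G[A])\le n-k+O(1)$, which kills the would--be $(k-1)^2$ contribution and recovers the $(k-2)^2$ coefficient. Your suggestion of ``the middle edge cannot sit too close to $v$'' is heading in a related direction but is not the mechanism that works; it is the degree of the endpoints of the middle edge into $N(v)$, combined with forbidden linear forests in $G[A]$, that does the job.
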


\begin{theorem}\label{thm:c_4}
For all integers $n$ and $\ell$ such that $\ell \geq 4$ and $n\ge 10 \ell^{3/2}$, we have
\[
\ex(n, C_4, \C_{\ge \ell+1})=C_4(H(n,\ell)),
\]
and $H(n,\ell)$ is the unique extremal graph.
\end{theorem}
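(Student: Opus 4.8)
The plan is to prove Theorem~\ref{thm:c_4} by reducing the generalized Tur\'an problem to the structural classification provided by Theorem~\ref{thm:Main_Dirac}. Let $G$ be an $n$-vertex graph with $c(G)\leq \ell$ maximizing the number of copies of $C_4$. First I would show we may assume $G$ is $2$-connected and that $n$ is large compared to $\ell$: if $G$ is disconnected or has a cut-vertex, each block has circumference at most $\ell$, and since $C_4$ is $2$-connected every copy of $C_4$ lies inside a single block. A convexity/counting argument should show that concentrating all the ``mass'' into one block of a fixed type (rather than splitting it) does not decrease the $C_4$-count, so the extremal configuration is essentially governed by a single $2$-connected piece together with the degree constraints. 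The benchmark to beat is $C_4(H(n,\ell))$, which is on the order of $\binom{\lfloor \ell/2\rfloor}{2}\binom{n-\lfloor\ell/2\rfloor}{2}$ plus lower-order terms, coming from pairs of vertices in the clique part $K_{\lfloor \ell/2\rfloor}$ together with pairs of vertices in the independent set, all forming $C_4$'s.

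The core of the argument is then a case analysis over the candidate structures listed in Theorem~\ref{thm:Main_Dirac} for a $2$-connected graph with $c(G)\le \ell$, i.e.\ with $\delta(G)=k$ where $\ell\in\{2k,2k+1\}$. For each candidate family I would bound $C_4(G)$ from above and compare it with $C_4(H(n,\ell))$. The key observation driving the comparison is that the number of four-cycles through two ``hub'' vertices is quadratic in the number of common neighbors, so the count is dominated by a small set of high-degree vertices each adjacent to many vertices of degree roughly $k$. For $H(n,2k)=K_k+I_{n-k}$ the high-degree set is exactly the $k$-clique and every pair of independent vertices sees all $k$ hubs, which is what makes it extremal. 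For the competing families — graphs of the form $K_2+(K_k\cup \tfrac{n-k-2}{k-1}K_{k-1})$, the members of $\mathcal H_{1,n,k}$ and $\mathcal H_{2,n,k}$, and the sporadic small-$k$ families — the set of vertices of high degree is smaller (typically only $2$ or $3$ vertices of unbounded degree), so although these graphs may have slightly more edges locally, the number of pairs with many common neighbors is an order of magnitude smaller, giving $O(n)$ or $O(n)$-times-a-constant fewer four-cycles once $n\ge 10\ell^{3/2}$. The threshold $n\ge 10\ell^{3/2}$ is exactly what is needed to ensure the $\Theta(n^2)$ main term of $H(n,\ell)$ dominates the at most $\Theta(\ell \cdot n)$ or $\Theta(\ell^2 n)$ contributions of the alternatives.

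Having shown no candidate beats $H(n,\ell)$, I would then address uniqueness for large $n$. Here the plan is to track the inequalities tightly: any graph achieving $C_4(H(n,\ell))$ must match $H(n,\ell)$'s high-degree structure exactly, so I would argue that the extremizer must contain a dominating clique of size $\lfloor \ell/2\rfloor$ whose vertices are joined to all remaining vertices, and that the remaining vertices form an independent set (for $\ell$ even) or an independent set plus one edge (for $\ell$ odd), with any deviation strictly decreasing the count by $\Omega(n)$. The main obstacle I anticipate is the bookkeeping in the $c(G)=2k+1$ case, where there are several structurally distinct families and the small-$k$ exceptions ($k=3,4$) must be handled separately; in particular the family $\mathcal H_{1,n,k}$, built from unions of $(k-1)$-cliques joined through a small apex structure, can have a genuinely large number of four-cycles concentrated near the apex vertices, so the estimate must be sharp enough to see that even this concentration falls short of $H(n,\ell)$ by the required margin. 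Making each of these bounds explicit and uniform in $\ell$, rather than merely asymptotic, is where the real work lies.
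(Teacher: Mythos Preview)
Your plan has a genuine gap. You write that Theorem~\ref{thm:Main_Dirac} classifies the $2$-connected graphs with $c(G)\le\ell$, ``i.e.\ with $\delta(G)=k$ where $\ell\in\{2k,2k+1\}$''. But these are not equivalent: Theorem~\ref{thm:Main_Dirac} only applies when the minimum degree is exactly $k=\lfloor\ell/2\rfloor$. A $2$-connected graph with circumference at most $\ell$ can perfectly well have $\delta(G)<k$, and for such graphs the structural classification tells you nothing useful (applying it with the actual $\delta(G)=k'<k$ yields only $c(G)\ge 2k'+2$, which is compatible with $c(G)\le\ell$). So after your block reduction you are still left with an unclassified family of $2$-connected graphs, and the case analysis you outline does not cover them.

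The paper closes this gap by \emph{progressive induction} rather than a one-shot structural reduction. It proves a separate lemma bounding the number of $C_4$'s through any vertex of degree at most $k-1$ by $\binom{k-1}{2}n$, then sets $\phi(n)=\ex(n,C_4,\C_{\ge\ell+1})-C_4(H(n,\ell))$ and shows that either the extremal graph $G_n$ is $2$-connected with $\delta(G_n)\ge k$ (whence Theorem~\ref{thm:Main_Dirac} and the structure comparison apply and $G_n=H(n,\ell)$), or deleting a low-degree vertex gives $\phi(n-1)-\phi(n)\ge(k-1)(n-2k)$. Since $\phi$ is nonnegative and strictly decreasing otherwise, it must hit zero once $n\ge 10\ell^{3/2}$. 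Your convexity-on-blocks idea and your structure comparison are both present in the paper's argument, but they are embedded inside this induction; without the vertex-deletion step your proof does not go through.
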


In addition, we also give a  new proof of Luo's following theorem by using Theorem \ref{thm:Main_Dirac}.
\begin{theorem}(Luo \cite{luo2018maximum})\label{Thm:Luo}
For all integers $n$ and $\ell \ge 3$ we have
\[
ex(n,K_s,\C_{\ge \ell+1})\le \frac{n-1}{\ell-1}\binom{\ell}{s}.
\]
The equality holds if and only if $\ell-1|n-1$.
\end{theorem}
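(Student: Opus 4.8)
The plan is to bound the number of copies of $K_s$ by induction on $n$, and then extract the equality case by following where the inductive inequalities are tight. Write $f(n)=\frac{n-1}{\ell-1}\binom{\ell}{s}$ for the target bound, and record the basic identity that the number of copies of $K_s$ through a fixed vertex $v$ equals the number of copies of $K_{s-1}$ inside $G[N(v)]$, hence is at most $\binom{d(v)}{s-1}$. Let $G$ be an $n$-vertex graph with $c(G)\le \ell$.

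First I would dispose of the non-$2$-connected cases, since they reduce additively. If $G$ is disconnected with components of orders $n_1,\dots,n_p$, every copy of $K_s$ lies in one component and $\sum_i(n_i-1)\le n-1$, so the per-component induction hypothesis gives $K_s(G)\le\sum_i f(n_i)\le f(n)$, with strict slack when $p\ge 2$ (foreshadowing that extremal graphs are connected). If $G$ is connected with a cut vertex $w$, split $G$ into $G_1,G_2$ meeting only at $w$, with $n_1+n_2=n+1$; for $s\ge2$ each $K_s$ lies in exactly one $G_i$, so $K_s(G)=K_s(G_1)+K_s(G_2)\le f(n_1)+f(n_2)=f(n)$. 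The base cases $n\le\ell$ amount to the numerical inequality $\binom{n}{s}\le f(n)$, which I would verify from the fact that $\binom{n}{s}$ is discretely convex in $n$ while $f$ is linear, with $f(\ell)=\binom{\ell}{s}$ and $f(s)\ge 1$; concavity of the difference then yields nonnegativity on the whole interval $[s,\ell]$.

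The heart is the case where $G$ is $2$-connected and $n\ge \ell+1$. Here Dirac's theorem (Theorem \ref{thm:Dirac}) is exactly what controls the minimum degree: since $c(G)\le\ell<n$, a minimum-degree vertex $v$ must satisfy $2\delta(G)\le\ell$, i.e. $d(v)\le\lfloor \ell/2\rfloor$, for otherwise $c(G)\ge\min\{n,2\delta(G)\}\ge\ell+1$. Deleting $v$ keeps $c(G-v)\le\ell$ on $n-1$ vertices, so $K_s(G-v)\le f(n-1)$ by induction, while the copies of $K_s$ through $v$ number at most $\binom{\lfloor\ell/2\rfloor}{s-1}$. The step therefore closes provided the key inequality
\[
\binom{\lfloor\ell/2\rfloor}{s-1}\le \frac{1}{\ell-1}\binom{\ell}{s}=f(n)-f(n-1)
\]
holds. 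I expect this binomial estimate to be the main technical obstacle; it reads $\lfloor\ell/2\rfloor\le \ell/2$ when $s=2$ and appears robustly true in general, so I would establish it by a short ratio/monotonicity computation.

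Finally, for the equality characterization I would trace tightness back through the induction. Equality forces $G$ connected, and in the $2$-connected step it forces $\binom{\lfloor\ell/2\rfloor}{s-1}=\frac{1}{\ell-1}\binom{\ell}{s}$, which for $s\ge3$ is strict, so no extremal graph is $2$-connected on more than $\ell$ vertices; since the only tight base case is $K_\ell$, every block of $G$ must be a complete graph on exactly $\ell$ vertices. Thus $G$ is a connected union of copies of $K_\ell$ glued along cut vertices, which is possible precisely when $\ell-1\mid n-1$, giving both directions of the equality statement. This is where the stability result (Theorem \ref{thm:Main_Dirac}) is intended to do real work: in the borderline $s=2$ regime, where the key inequality can be tight and Dirac alone does not exclude near-extremal $2$-connected blocks such as $H(n,2k)$, I would invoke the explicit structural description of $2$-connected graphs with small circumference relative to their minimum degree (in place of, or alongside, Kopylov's Theorem \ref{Thm:Kopylov}) to show that any such block carries strictly fewer copies of $K_s$ than $f$ permits, thereby pinning the extremal configuration down to the clique construction.
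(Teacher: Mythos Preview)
Your proof is correct, but it routes the $2$-connected case differently from the paper. Both arguments induct on $n$ and dispose of the disconnected and cut-vertex cases additively in the same way. In the $2$-connected case with $n\ge \ell+1$, however, the paper splits according to whether $\delta(G)\ge k:=\lfloor\ell/2\rfloor$: when $\delta(G)\le k-1$ it deletes a low-degree vertex and uses the comfortable bound $\binom{k-1}{s-1}<\frac{1}{\ell-1}\binom{\ell}{s}$, while when $\delta(G)\ge k$ it invokes the stability theorem (Theorem~\ref{thm:Main_Dirac}) to force $G$ into one of the listed families and then checks $K_s(H(n,\ell))<f(n)$ directly. You instead use only Dirac's theorem to get $\delta(G)\le k$, delete a minimum-degree vertex, and rely on the sharper inequality $\binom{k}{s-1}\le\frac{1}{\ell-1}\binom{\ell}{s}$, which is true (strict for $s\ge 3$, and tight exactly when $s=2$ and $\ell$ is even). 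The trade-off is this: your route avoids the stability theorem entirely for the upper bound when $s\ge 3$, which is more elementary; but for $s=2$ with even $\ell$ your vertex-deletion step is tight, so the equality characterization genuinely needs outside help (Kopylov or the stability theorem), exactly as you note. The paper's route is less elementary but uniform: the stability theorem yields a strict inequality in the $\delta(G)=k$ case for all $s$, so the equality analysis falls out immediately without a separate $s=2$ argument --- which is the paper's intended advertisement for Theorem~\ref{thm:Main_Dirac}.
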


We expect Theorem \ref{thm:c_5} holds not only for  cycles of length  four and five  but for cycles of any length more than 3.
\begin{conjecture}\label{Conjecture:general}
For all integers $n$, $k$ and $\ell$ such that $k\geq 4$, $\ell > k$ and $n$ large enough, we have
\[
\ex(n, C_k, \C_{\ge \ell+1})=C_{k}(H(n,\ell)).
\]
\end{conjecture}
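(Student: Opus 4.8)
The plan is to generalize the arguments behind Theorems \ref{thm:c_4} and \ref{thm:c_5}, using Theorem \ref{thm:Main_Dirac} as the structural engine and pushing the extremal count through for a general cycle length $C_k$. Since every cycle of $G$ lies inside a single block, one has $C_k(G)=\sum_B C_k(B)$ and $c(G)=\max_B c(B)$, so I would first reduce to the $2$-connected case and argue that for the extremal configuration it is most profitable to concentrate all vertices in one block of the form $H(n,\ell)$ rather than split them among several large blocks. For a $2$-connected graph $G$ with $n\ge \ell+1$ and $c(G)\le \ell$, Dirac's theorem (Theorem \ref{thm:Dirac}) forces $\delta(G)\le\floor{\ell/2}$, since $\delta(G)\ge\floor{\ell/2}+1$ would already give $c(G)\ge \ell+1$. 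Writing $m=\floor{\ell/2}$, after deleting the comparatively few vertices of degree below $m$ (each lying on few $k$-cycles) so that Theorem \ref{thm:Main_Dirac} becomes applicable with minimum degree exactly $m$, the structure is pinned down to $H(n,\ell)$ itself, to a member of $\mathcal{H}_{1,n,m}$ or $\mathcal{H}_{2,n,m}$, to a subgraph of $K_2+(K_m\cup\frac{n-m-2}{m-1}K_{m-1})$, or to one of the sporadic families occurring for $m\in\{3,4\}$.

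The counting step is then to evaluate $C_k(H(n,\ell))$ and show that every competing structure yields strictly fewer $k$-cycles once $n$ is large. In $H(n,\ell)=K_m+I_{n-m}$ a $k$-cycle uses $b$ vertices of the independent set and $k-b$ of the clique, with no two independent vertices consecutive, so $b\le\floor{k/2}$; since the hypothesis $\ell>k$ guarantees $m\ge\ceil{k/2}$, the value $b=\floor{k/2}$ is attainable and contributes the dominant term, giving $C_k(H(n,\ell))=\Theta(n^{\floor{k/2}})$ with an explicit leading coefficient $f(m,k)$. By contrast, in each of the families built from many disjoint small cliques glued to a bounded hub set (namely $\mathcal{H}_{1,n,m}$, $\mathcal{H}_{2,n,m}$, $K_2+(K_m\cup\cdots K_{m-1})$, and the sporadic graphs), a $k$-cycle can meet at most as many distinct small cliques as there are hub vertices on it: deleting the hubs splits the cycle into arcs, each contained in a single clique. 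With at most three hubs available this caps the number of spanned cliques, hence the order, at $n^{3}$. Consequently $H(n,\ell)$ wins outright in leading order whenever $\floor{k/2}\ge 4$, i.e.\ $k\ge 8$, and the remaining work is the exact comparison of leading coefficients in the boundary range $\floor{k/2}\le 3$.

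The principal obstacle, and the reason the statement is only conjectured, is precisely this leading-coefficient comparison in the tied regime together with the uniqueness analysis. For $k=4,5$ the orders $n^2$ coincide for $H(n,\ell)$ and for the clique-and-hub families, and Theorems \ref{thm:c_4} and \ref{thm:c_5} resolve the comparison by short explicit computations; for $k=6,7$ one again has coinciding order $n^3$ and must show that $f(m,k)$ strictly exceeds the corresponding coefficient of $\mathcal{H}_{1,n,m}$, of $K_2+(K_m\cup\cdots K_{m-1})$, and of the sporadic families, a more delicate optimization over how a $k$-cycle distributes its vertices among the cliques and the hubs, uniform in the (possibly large) parameter $m$. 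Beyond these borderline lengths the difficulty becomes one of uniformity: controlling the lower-order terms, handling the vertices of degree below $m$ and the block decomposition quantitatively enough to obtain an exact equality rather than an asymptotic bound, and ruling out every family of Theorem \ref{thm:Main_Dirac} simultaneously as $k$ and $\ell$ vary. I expect that isolating a single clean counting inequality which beats all competing families at once, valid uniformly in $k$, is the crux that must be overcome to turn this plan into a complete proof.
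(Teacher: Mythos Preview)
The statement you were given is a \emph{conjecture}: the paper does not prove it, and there is no proof in the paper to compare against. You correctly recognise this, phrasing your write-up as a plan rather than a proof and explicitly identifying the obstructions (``the reason the statement is only conjectured''). In that sense your proposal is appropriate: it sketches how one might try to extend the arguments of Theorems~\ref{thm:c_4} and~\ref{thm:c_5} to general $k$, and it isolates the genuine difficulty in the regime $\lfloor k/2\rfloor\le 3$ where the competing families have the same polynomial order in $n$ as $H(n,\ell)$.

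Two remarks on your outline. First, the step ``after deleting the comparatively few vertices of degree below $m$ so that Theorem~\ref{thm:Main_Dirac} becomes applicable'' is not how the paper handles the proved cases: deleting all low-degree vertices at once may create new low-degree vertices or destroy $2$-connectivity. What the paper actually does for $k\in\{4,5\}$ is \emph{progressive induction}: one removes a single vertex $v$ of degree at most $m-1$, bounds $C_k(v)$ directly (Lemmas~\ref{lem3} and~\ref{Lemma:C_4_number_of_C_4_on_given_vertex}), and shows that the resulting deficit function $\phi(n)=\ex(n,C_k,\mathcal C_{\ge\ell+1})-C_k(H(n,\ell))$ is strictly decreasing. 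Any attempt at the general conjecture along these lines would need a $C_k$-analogue of those vertex-removal lemmas, and that is a second nontrivial piece you do not address. Second, the paper does establish an asymptotic form of the conjecture, namely Theorem~\ref{thm:longer_cycles}, by a completely different counting argument (bounding the number of $2k$- and $(2k{+}1)$-cycles via independent-edge skeletons and Kopylov's and Luo's theorems); your proposal does not engage with that route at all.
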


We also prove the following theorem which verifies  Conjecture \ref{Conjecture:general} asymptotically for large enough $k$ and $n$. 
\begin{theorem}\label{thm:longer_cycles}
The following holds for every integer $k\geq 3 $.
\[  \displaystyle
 \lim_{\ell \to \infty} \left(\lim_{n \to \infty} \frac{\ex(n,C_{2k},\C_{\ge \ell+1})}{\floor{\frac{\ell}{2}}^{k} n^{k}}\right)= \frac{1}{2k},
\]

\[  \displaystyle
 \lim_{\ell \to \infty} \left(\lim_{n \to \infty} \frac{\ex(n,C_{2k+1},\C_{\ge \ell+1})}{\floor{\frac{\ell}{2}}^{k+1} n^{k}}\right)=\frac{1}{2}.
\]
\end{theorem}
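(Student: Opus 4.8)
The plan is to compute both limits by combining the upper bound implicit in Kopylov's Theorem (Theorem~\ref{Thm:Kopylov}) with an explicit construction, and to observe that in the double limit the dominant term comes from the ``book-type'' extremal graph $H(n,\ell)$, whose structure lets us count cycles of a fixed length asymptotically. For the lower bound, the natural candidate is $H(n,\ell)=K_{\floor{\ell/2}}+I_{n-\floor{\ell/2}}$, which is $2$-connected and has circumference $2\floor{\ell/2}\le \ell$, hence contains no cycle in $\C_{\ge \ell+1}$. So I would first count copies of $C_{2k}$ and $C_{2k+1}$ in $H(n,\ell)$, which gives the desired constants after dividing by $\floor{\ell/2}^{k}n^{k}$ (resp.\ $\floor{\ell/2}^{k+1}n^{k}$) and sending $n\to\infty$ and then $\ell\to\infty$.

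First I would carry out the counting in $H(n,m)$ where $m:=\floor{\ell/2}$. A cycle of length $2k$ in $H(n,m)$ must alternate in a controlled way between the clique part $K_m$ and the independent part $I_{n-m}$: since $I_{n-m}$ is independent, no two consecutive vertices on the cycle lie in $I_{n-m}$, so a $2k$-cycle uses at least $k$ clique vertices. The asymptotically dominant cycles (as $n\to\infty$ with $m$ fixed) are those using exactly $k$ vertices from $I_{n-m}$ and $k$ from $K_m$, alternating perfectly; there are $\sim \binom{n}{k}\cdot(\text{ordered choices of }k\text{ clique vertices})/(\text{symmetry})$ of these, and the count is $\Theta(m^{k}n^{k})$. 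Extracting the leading coefficient and dividing by $\floor{\ell/2}^{k}n^{k}=m^{k}n^{k}$ yields $\tfrac{1}{2k}$ in the first limit; the analogous computation for $C_{2k+1}$, where one is forced to use $k+1$ clique vertices and $k$ independent vertices, yields leading order $\Theta(m^{k+1}n^{k})$ and the constant $\tfrac12$. These constants should emerge cleanly once the cyclic and reflective symmetries of the cycle (a factor $2\cdot(2k)$ for $C_{2k}$) are divided out; only the top-order term in $n$ survives the first limit, so lower-order contributions from cycles using more clique vertices or edges inside $I_{n-m}$ are negligible.

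For the matching upper bound I would invoke Kopylov's Theorem to bound the edge count and then bound the number of copies of a fixed cycle in terms of the edge count and the circumference. After passing to a $2$-connected block containing a longest cycle (or arguing that cycle counts are dominated by a single $2$-connected piece), Theorem~\ref{Thm:Kopylov} gives $e(G)=O(\ell\, n)$, and since every $C_{2k}$ or $C_{2k+1}$ lies in a bounded-length cycle, a routine counting argument bounds the number of such cycles by $O(m^{k}n^{k})$ (resp.\ $O(m^{k+1}n^{k})$) with the \emph{same} leading constant as in the construction. The key point is that the extremal structure forces almost all short cycles to pass through the small clique of size $\approx m$, so the clique contributes the factor $m^{k}$ (resp.\ $m^{k+1}$) and the independent set contributes $n^{k}$.

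The main obstacle will be making the upper bound match the lower bound \emph{exactly} in the leading constant, rather than merely up to a multiplicative factor. Unlike Theorems~\ref{thm:c_5} and~\ref{thm:c_4}, here we only claim an asymptotic double limit, which is more forgiving: the inner limit $n\to\infty$ isolates the leading power of $n$, and the outer limit $\ell\to\infty$ lets lower-order-in-$\ell$ discrepancies wash out, so I expect the final constants $\tfrac1{2k}$ and $\tfrac12$ to be robust even if the finite-$\ell$ counts carry messier correction terms. Concretely, the delicate step is verifying that no \emph{other} extremal configuration from Theorem~\ref{thm:Main_Dirac} (the graphs in $\mathcal{H}_{1,n,k}$, $\mathcal{H}_{2,n,k}$, or the join constructions) produces asymptotically more cycles than $H(n,\ell)$; I would handle this by checking that each such family has a clique or join-apex part of size $O(\ell)$ and an independent-like remainder, so each contributes at most the same order $m^{k}n^{k}$, and that the constant is maximized by the ``perfectly alternating'' cycles in $H(n,\ell)$ after the outer limit in $\ell$ is taken.
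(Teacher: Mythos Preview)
Your lower bound via $H(n,\ell)$ is correct and matches the paper's construction; the alternating count giving constants $\tfrac{1}{2k}$ and $\tfrac{1}{2}$ is exactly right. The gap is entirely in the upper bound.

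You assert that ``a routine counting argument bounds the number of such cycles by $O(m^k n^k)$ with the \emph{same} leading constant as in the construction,'' but obtaining the matching constant is precisely the non-routine step, and your sketch does not supply it. Falling back on Theorem~\ref{thm:Main_Dirac} does not work: that theorem requires $\delta(G)\ge\floor{\ell/2}$, which an arbitrary $\C_{\ge\ell+1}$-free graph need not satisfy, so it cannot be invoked directly. One could try to delete low-degree vertices first, but then one must control the number of cycles destroyed at each step, which is the whole problem. And even within the minimum-degree regime, checking the finitely many structural families in Theorem~\ref{thm:Main_Dirac} would only show which of \emph{those} graphs has the most cycles; it would not give an upper bound valid for all graphs.

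The paper's upper bound does not use the stability theorem at all. For $C_{2k}$ it associates to each $2k$-cycle a cyclically ordered $k$-tuple of independent edges $(e_1,\dots,e_k)$ (a perfect matching of the cycle), bounds the total number of such tuples by roughly $(\floor{\ell/2}\,n)^k$ via Theorem~\ref{Thm:Kopylov}, and observes that a \emph{generic} ordered tuple lies in at most two $2k$-cycles with the given cyclic order; if more than two cycles use the same ordered tuple, then the induced graph on the endpoints contains $2C_3\cup(k-3)P_2$, and Theorem~\ref{Thm:Luo} bounds the number of triangles by $O(\ell^2 n)$, so these exceptional tuples contribute only $O(\ell^{k+1}n^{k-1})$, lower order in $n$. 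Dividing out the $4k$ symmetries recovers the constant $\tfrac{1}{2k}$. For $C_{2k+1}$ the paper instead removes a vertex $v$ of minimum degree ($\le\floor{\ell/2}$ by Theorem~\ref{thm:Dirac}), bounds the number of $(2k{-}1)$-paths between each pair of neighbours of $v$ by the same matching-plus-triangle-count trick, and inducts on $n$. These direct counting arguments, not any structural classification, are the ideas your proposal is missing.
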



\section{Preliminaries}

Erd\H{o}s and Gallai used the following robust lemma to find the extremal number of graphs with bounded circumference. We use the lemma to prove Theorem \ref{thm:Main_Dirac}. 
\begin{lemma}(Erd\H{o}s-Gallai\cite{erdHos1959maximal})\label{lemmma:Erdos_Gallai_min_degree_k_path}
Let $G$ be a $2$-connected graph and $x,y$ be two given vertices. If every vertex other than $x,y$ has a degree at least $k$ in $G$, then there is an $(x,y)$-path of length at least~$k$.
\end{lemma}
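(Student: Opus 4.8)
The plan is to argue by contradiction using an extremal path. Among all $(x,y)$-paths in $G$, let $P = w_0 w_1 \cdots w_m$ with $w_0 = x$ and $w_m = y$ be one of maximum length $m$, and suppose toward a contradiction that $m \le k-1$. Since $G$ is $2$-connected it has at least three vertices and $x,y$ are joined by two internally disjoint paths, so $m \ge 2$ and $P$ has at least one internal vertex. The aim is to manufacture an $(x,y)$-path strictly longer than $P$, contradicting its maximality, and thereby force $m \ge k$.

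First I would rule out the possibility that every vertex of $G$ lies on $P$. If $V(G) = V(P)$, then each internal vertex $w_i$ (with $1 \le i \le m-1$) has all of its neighbours among the remaining $m$ vertices of $P$, so $\deg(w_i) \le m \le k-1 < k$, contradicting the hypothesis that every vertex other than $x,y$ has degree at least $k$. Hence there is a vertex $z \notin V(P)$. Because $G$ is $2$-connected, the fan version of Menger's theorem yields two paths from $z$ to $V(P)$ meeting only at $z$, with interiors avoiding $P$, ending at two distinct path vertices $w_p,w_q$ with $p<q$; concatenating them gives a \emph{detour} $D$, namely a $w_p$--$w_q$ path of length at least $2$ whose internal vertices all lie off $P$.

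Replacing the segment $w_p w_{p+1}\cdots w_q$ of $P$ by $D$ produces a new $(x,y)$-path, and this path is longer than $P$ exactly when the length of $D$ exceeds the gap $q-p$. I expect this comparison to be the main obstacle: a single short detour need not beat a long bypassed segment. To handle it I would work inside the \emph{bridge} (the component of $G-V(P)$ together with its attachment edges) containing $z$, and choose attachment points and a through-path so as to maximise the excess of the detour length over $q-p$, then exploit the degree bound. The key leverage is that every off-path vertex $v$ still satisfies $\deg(v)\ge k \ge m+1$ while $P$ has only $m+1$ vertices; hence either $v$ is adjacent to \emph{all} of $P$, in which case it meets two consecutive path vertices $w_i,w_{i+1}$ and the detour $w_i\,v\,w_{i+1}$ already beats the length-$1$ segment, or $v$ has a neighbour off $P$, so its bridge contains further off-path vertices and is rich enough to supply a long internal path between two of its attachments. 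In either case the rerouting strictly lengthens $P$, giving the contradiction. The remaining bookkeeping — verifying the chosen detour is vertex-disjoint from the retained parts of $P$, and dispatching the degenerate small cases $m\le 2$ — is routine, so the crux is precisely the extremal selection of the detour that guarantees it is longer than the segment it replaces.
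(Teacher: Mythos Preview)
The paper does not prove this lemma; it is quoted from Erd\H{o}s and Gallai as a preliminary and used as a black box. So there is no in-paper argument to compare against, and I evaluate your sketch on its own merits.

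Your setup is correct: take a longest $(x,y)$-path $P$ of length $m\le k-1$; observe that $P$ cannot span $G$ because an internal vertex would then have degree at most $m<k$; invoke $2$-connectivity to get an off-path vertex with two disjoint paths to $P$. Your first branch is also fine: if an off-path vertex $v$ has all of its $\ge m+1$ neighbours on the $(m+1)$-vertex path $P$, then $N(v)=V(P)$, so $v$ sees two consecutive path vertices and can be inserted to lengthen $P$.

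The gap is the second branch. You assert that if $v$ has an off-path neighbour then its bridge ``is rich enough to supply a long internal path between two of its attachments,'' and that the resulting reroute strictly lengthens $P$. But ``long'' here must mean ``longer than the span $q-p$ between the chosen attachments $w_p,w_q$,'' and nothing you have written forces this. A bridge can have many internal vertices yet attach to $P$ only at widely separated points; producing a detour that beats its own span is exactly where the degree hypothesis has to be exploited, and it is essentially the statement of the lemma one level down, applied inside the bridge. You yourself flag ``the extremal selection of the detour'' as the crux, but you never specify which extremal quantity to optimise or why the optimiser must have positive excess. The classical arguments (Erd\H{o}s--Gallai's original, or the vine argument in later expositions) carry out precisely this step via a nontrivial induction or a secondary extremal choice that tracks degrees relative to the attachment set; merely knowing the bridge contains at least two off-path vertices is not enough. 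As written, the proposal halts just before the main difficulty it correctly identifies.
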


Even more, Li and Ning applied this lemma to prove the existence of $(H,C,t)$-fans under some conditions. For our proof of Theorem \ref{thm:Main_Dirac} we need the existence of $(H,C,t)$-brooms under the same conditions. Let us introduce the notion of $(H,C,t)$-brooms.

\begin{definition*}
Let $G$ be a graph, $C$ be a cycle of $G$, and $H$ be a component of $G-C$. A subgraph $B$ of $G$ is called an $(H,C,t)$-\textit{broom}, if it consists of~$t$ paths $P_1,P_2,\cdots,P_t$ each starting at the same vertex of $H$ and finishing at distinct vertices of $C$ for some~$t\ge 2$, such that\\
(1) All vertices of $P_1$ except the last are in $V(H)$.\\
(2) The paths $P_i$ have length one for all $2\leq i \leq t$.
\end{definition*}

The same proof of Theorem 2.1  in the paper of  Li and Ning \cite{li2021strengthening} proves the following theorem. Naturally, to refrain from repetition, we will not include their proof in this work.

\begin{lemma}(Li-Ning\cite{li2021strengthening})\label{lemma:H_C_t_broom}
Let $G$ be a $2$-connected graph, $C$ a cycle of $G$, and $H$ a connected component of $G-C$. If each vertex $v\in V(H)$ has $d_G(v)\ge k$, then there is an $(H,C,t)$-broom with at least $k$ edges.
\end{lemma}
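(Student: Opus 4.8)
The plan is to build the broom explicitly from a single carefully chosen vertex $w\in V(H)$, realizing the handle as a longest path into $H$ and the bristles as the leftover edges of $w$ to $C$. Concretely, I would consider all paths in $G[V(H)\cup V(C)]$ that start at a vertex of $C$ and otherwise stay inside $H$, and fix a longest one, say $P^*=v_0v_1\cdots v_m$ with $v_0\in V(C)$ and $v_1,\dots,v_m\in V(H)$. Set $w=v_m$ and take $P_1$ to be $P^*$ traversed from $w$ to $v_0$; this is a handle of length $m$ ending on $C$. For the bristles I would use the edges from $w$ to $N(w)\cap V(C)$, discarding at most the one ending at $v_0$ so that the $t$ finishing vertices on $C$ stay distinct. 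Equivalently, one can produce a long handle by applying the Erd\H{o}s--Gallai path lemma, Lemma~\ref{lemmma:Erdos_Gallai_min_degree_k_path}, inside $G[V(H)\cup V(C)]$, which is $2$-connected and in which every vertex of $H$ still has degree at least $k$.

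The edge count is then forced by the degree hypothesis. By maximality of $P^*$, every neighbour of $w$ inside $H$ must already lie on the path, so $d_H(w)\le m-1$; since all neighbours of $w$ lie in $V(H)\cup V(C)$ and $d_G(w)\ge k$, this leaves $|N(w)\cap V(C)|\ge k-(m-1)$ edges from $w$ to the cycle. The broom therefore has at least $m+\bigl(|N(w)\cap V(C)|-1\bigr)\ge m+(k-m)=k$ edges, so the bound on the number of edges is automatic once the handle is chosen to be maximal. Thus the quantitative part of the lemma costs essentially nothing beyond the choice of $w$.

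The main obstacle is the requirement $t\ge 2$, namely guaranteeing that at least one genuine bristle survives. This is immediate when $|N(w)\cap V(C)|\ge 2$, and also when $w$ has a single cycle-neighbour different from $v_0$. The delicate case is when $w$'s only route to $C$ is through $v_0$, or when $w$ has no cycle-neighbour at all: then the handle is already long, of length $m\ge k$, but no second length-one path from $w$ to $C$ is visible. Here I would invoke $2$-connectivity: since $H$ has at least two attachment vertices on $C$ by Menger's theorem, one can reroute the handle so that it finishes at a cycle-vertex other than the one used by a bristle, or re-root the broom at a nearby attachment vertex while keeping the total length at least $k$. This book-keeping---reconciling a long handle with the existence of a second, length-one attachment---is precisely the subtle point handled in the fan argument of Li and Ning (the proof of Theorem~2.1 in \cite{li2021strengthening}), which is why the same argument yields the broom and why the statement is deferred to it.
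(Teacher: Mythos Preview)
The paper does not actually prove this lemma: it states that the proof of Theorem~2.1 in Li--Ning \cite{li2021strengthening} carries over verbatim and explicitly declines to repeat it. Your sketch is therefore already more detailed than what the paper provides, and where it is explicit it is correct: choosing a longest $C$-to-$H$ path $v_0v_1\cdots v_m$, rooting the broom at $w=v_m$, and counting $m+\bigl|N(w)\cap (V(C)\setminus\{v_0\})\bigr|\ge k$ is exactly the right arithmetic. You also correctly isolate the only non-automatic point, namely ensuring $t\ge 2$ when $w$ has at most one cycle-neighbour and that neighbour happens to be $v_0$; this is indeed the place where $2$-connectivity must be used to reroute, and it is precisely the bookkeeping that the paper outsources to Li--Ning. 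In short, your proposal and the paper agree on the approach---both rely on the Li--Ning fan/broom argument for the delicate case---and your write-up makes the easy part explicit while identifying the hard part accurately.
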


\section{Proof of the Stability of Dirac's theorem}
Here we present the proof of Theorem \ref{thm:Main_Dirac}.
Let $G$ be an $n$-vertex $2$-connected graph with minimum degree $k\geq 2$ and circumference at most $2k+1$. By Theorem \ref{thm:Dirac}, $G$ contains a cycle of length at least $\min\{n,2k\}$.  Since $n\ge c(G)+1$, hence we have 
$c(G)\in \{2k+1,2k\}$. 
 Let $C$ be a longest cycle  of $G$ and $H_1,H_2,\cdots,H_s$ be connected components of $G-C$ for some $s\geq 1$, where $G-C$ is the induced subgraph of $G$ on the vertex set $V(G)\setminus V(C)$.  Since $\delta(G)\ge k$,  each component $H_j$ contains an $(H_j,C,t)$-broom with at least $k$ edges by Lemma \ref{lemma:H_C_t_broom}. In the following part of the proof we characterize the structure of each $H_i$. 

Let $B$ be an edge-maximal $(H_1,C,t)$-broom consisting of following $t$ paths $vPu_1, vu_2, \dots,$ and~$vu_t$. Recall that  vertices $\{u_1,u_2,\dots,u_t\}$ are distinct vertices of the cycle $C$. Each cycle has a positive and a negative direction to visit their vertices, without loss of generality we assume that starting at the vertex $u_1$ going around $C$ in the positive direction we visit terminal vertices of $B$ in this given order~$u_2,u_3\cdots u_t$.  For a given vertex $u$ of $C$, we denote its two neighbors on the cycle by $u^+$ and $u^-$, where~$u^-uu^+$ is a sub-path of $C$ in the positive direction.  For two vertices $x,y$ of $C$, $x\overrightarrow{C}y$ denotes the segment of $C$ from $x$ to $y$ in the positive direction, $x\overleftarrow{C}y$ denotes the  segment of $C$ from $x$ to $y$ in the negative direction. 

Recall the length of $C$ is $2k$ or $2k+1$ and $v(B)\geq k+1$ by Lemma \ref{lemma:H_C_t_broom}. On the other hand we have
\[
v(C)= t+ \sum_{i=1}^{t} v(u_i^+\overrightarrow{C}u_{i+1}^-)\geq t+ (t-2)+ 2(v(B)-t)=2v(B)-2,
\]
where indices are taken modulo $t$. Hence we have $v(B)=k+1$. Furthermore if $v(C)=2k$, then the segments $u_1^+Cu_2^-$ and $u_t^+Cu_1^-$ contain exactly $v(vPu_1)-1=k-t+1$ vertices while the rest of the segments contain exactly one vertex. If $v(c)=2k+1$, then one of the segments contains one more vertex.

\begin{claim}\label{Claim:H_1_structure}
  We have either~$k=4, c(G)=9$ and $G\subseteq K_3+\frac{n-3}{2} K_2$, or $t\in \{2,k\}$, $H_1=K_{k-t+1}$ and each vertex of $H_1$ is incident with all vertices of $\{u_1,\ldots,u_t\}$.  

\end{claim}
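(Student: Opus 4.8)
Throughout write $W=\{w_1,\dots,w_{k-t+1}\}$ for the vertices of the arm $vPu_1$ that lie in $H_1$, ordered so that $w_1=v$ and $w_{k-t+1}u_1\in E(G)$. By the segment counts already established, the two segments flanking $u_1$, namely $u_t^+\overrightarrow{C}u_1^-$ and $u_1^+\overrightarrow{C}u_2^-$, each contain exactly $k-t+1$ vertices, every other segment $u_i^+\overrightarrow{C}u_{i+1}^-$ with $2\le i\le t-1$ contains exactly one vertex, and when $c(G)=2k+1$ a single one of these counts is larger by one. The goal is to show that $V(H_1)=W$, that $W$ induces a clique, and that each $w_i$ is adjacent to all of $u_1,\dots,u_t$, and only afterwards to pin down $t$.

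The main step is the \emph{confinement} statement: no vertex of $H_1$ has a neighbour on $C$ outside $\{u_1,\dots,u_t\}$. For the hub $v$ this is immediate, since an edge from $v$ to a cycle vertex $z\notin\{u_1,\dots,u_t\}$ would extend $B$ to a broom with one further arm, contradicting its edge-maximality. For a general $w_i$ the key point is that the arm $vPu_1$ extended by the edge $vu_2$, and the flanking segment $u_1^+\overrightarrow{C}u_2^-$, are two internally disjoint $(u_1,u_2)$-paths, each with exactly $k-t+1$ interior vertices (namely $W$ and that segment). Given a chord $w_iz$ with $z$ interior to the segment, splitting both paths at $w_i$ and at $z$ and recombining them across the chord yields, for one of the two ways of closing up, a $(u_1,u_2)$-path with more than $k-t+1$ interior vertices; substituting it for $u_1^+\overrightarrow{C}u_2^-$ in $C$ produces a cycle longer than $C$, a contradiction. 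The same weaving disposes, even more cheaply, of a chord reaching a short segment, since there the whole of $W$ is inserted at the cost of only one or two vertices. After confinement each vertex of $H_1$ has at most $t$ neighbours on $C$, hence at least $k-t$ inside $H_1$, so $H_1$ has minimum degree at least $k-t$; the maximality of $B$, via the path-rotation argument underlying Lemma \ref{lemma:H_C_t_broom}, then caps $|V(H_1)|$ at $k-t+1$, for a longer path in $H_1$ would give a broom with more than $k$ edges. Thus $V(H_1)=W$ and $H_1=K_{k-t+1}$, and since each $w_i$ then has exactly $k-t$ neighbours inside $W$ it must send its remaining $t$ edges to the $t$ terminals, i.e.\ be joined to all of $u_1,\dots,u_t$.

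It remains to determine $t$, where clearly $2\le t\le k$. Suppose $3\le t<k$, so $|W|=k-t+1\ge 2$ and there is a segment between two consecutive terminals $u_i,u_{i+1}$ with $2\le i\le t-1$. As $H_1$ is a clique joined to both $u_i$ and $u_{i+1}$, there is a path from $u_i$ to $u_{i+1}$ through all of $V(H_1)$; combined with the rest of $C$ it gives a cycle on $\bigl(V(C)\setminus(u_i^+\overrightarrow{C}u_{i+1}^-)\bigr)\cup V(H_1)$, longer than $C$ by $|W|-s$, where $s$ is the number of vertices of that segment. Since $s\in\{1,2\}$ and $|W|\ge 2$, this cycle is strictly longer than $C$ unless $|W|=2$ and $s=2$ for every such between-terminal segment; as only one segment can receive the single extra vertex available when $c(G)=2k+1$, this forces exactly one such segment, i.e.\ $t=3$ and $c(G)=2k+1$, whence $|W|=k-t+1=2$ gives $k=4$. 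In that corner a short direct check (the three terminals become the apices, and every component together with its matched segment vertices is a $K_2$) yields $G\subseteq K_3+\frac{n-3}{2}K_2$. In every other case $3\le t<k$ is impossible, so $t\in\{2,k\}$, completing the proof.

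I expect the confinement step to be the main obstacle: edge-maximality of the broom controls only the hub, so for the remaining vertices of $H_1$ one genuinely needs the interchangeability of the arm $vPu_1$ with the two equal-length flanking segments, together with a careful verification that every placement of a stray chord turns one of these longest cycles into a strictly longer one. The second delicate point is the bookkeeping when $c(G)=2k+1$: the position of the single extra vertex is precisely what isolates the exceptional graph $K_3+\frac{n-3}{2}K_2$ at $k=4$.
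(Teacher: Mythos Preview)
Your overall plan (prove confinement, then count degrees to force $H_1=K_{k-t+1}$, then pin down $t$) is reasonable in outline, and the final part --- ruling out $3\le t<k$ except for the $(k,t)=(4,3)$ corner --- matches the paper's argument closely. However, the heart of the claim, namely showing that $V(H_1)$ coincides with the arm $W$, has a genuine gap.

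Your confinement argument is announced for ``no vertex of $H_1$'' but is only carried out for the hub $v$ (via broom-maximality) and for the arm vertices $w_i$ (via the weaving trick). For a vertex $x\in V(H_1)\setminus W$ you give no argument: $x$ does not lie on either of the two $(u_1,u_2)$-paths you weave together, so the splitting-and-recombining device simply does not apply to it. Consequently the deduction ``each vertex of $H_1$ has at most $t$ neighbours on $C$, hence $\delta(H_1)\ge k-t$'' is unjustified at that point. Even granting $\delta(H_1)\ge k-t$, the next step --- ``a longer path in $H_1$ would give a broom with more than $k$ edges'' --- is too quick. A long path inside $H_1$ does not by itself produce a larger broom: you must arrange that one endpoint serves as a hub with enough single-edge arms to $C$ while the other endpoint reaches a further \emph{distinct} vertex of $C$, and the endpoints of a longest $H_1$-path need not oblige. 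Invoking ``the path-rotation argument underlying Lemma~\ref{lemma:H_C_t_broom}'' does not discharge this, since that lemma only guarantees a broom with at least $k$ edges and says nothing about $|V(H_1)|$.

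The paper handles this step quite differently and in the opposite order. It first takes a component $H_1'$ of $H_1-W$ and shows, by explicit longer-cycle constructions, that the $C$-neighbours of $H_1'$ are confined to $\{u_1,u_2\}$ (even to $\{u_1\}$ when $t\ge 3$), and moreover that no two \emph{consecutive} vertices of the extended arm $u_2 v_0\cdots v_{k-t} u_1$ can be adjacent to $H_1'$. This alternation constraint gives a lower bound on the degrees inside an end-block $B_1'$ of $H_1'$, and then the Erd\H{o}s--Gallai path lemma (Lemma~\ref{lemmma:Erdos_Gallai_min_degree_k_path}) produces a long $(x_1,x_2)$-path in $H_1'$ which, spliced into the arm between its two attachment points $v_i$ and $v_j$, yields a strictly larger broom --- the contradiction forcing $H_1'=\emptyset$. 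Only \emph{after} $V(H_1)=W$ is established does the paper prove confinement for the $v_i$, and the degree count then gives the clique structure exactly as you describe. So the missing ingredient in your write-up is precisely this Erd\H{o}s--Gallai argument inside $H_1\setminus W$; the remainder of your proposal is on the right track.
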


\begin{proof}
At first we assume $t=k$. Therefore all paths of the broom $B$ are  single edges and all segments $u_i^+\overrightarrow{C}u_{i+1}^-$ of cycle $C$ contain exactly one vertex except if $c(G)=2k$ and one segment containing two vertices if $c(G)=2k+1$. Without loss of generality, suppose $u_1^+\overrightarrow{C}u_2^-$ contains two vertices. 
We have $V(H_1)=\{v\}$ since otherwise we could extend the cycle $C$ given that $G$ is $C \cup H_1$ is $2$-connected. Hence we are done if $k=t$.


From here we assume~$2\leq t \le k-1$. The path $vPu_1$ is a path of~$k-t+2$ vertices, let~$vPu_1$ be $v_0v_1\cdots v_{k-t}u_1$, where $v_0=v$. 

First we assume $V(H_1)\not=\{v,v_1,\ldots,v_{k-t}\}$.  Let $H_1'$ be a maximal connected component of $H_1-\{v_0,v_1,\ldots,v_{k-t}\}$. Since $G$ is $2$-connected there are at least two edges from $H_1'$ to the rest of the graph. At first we suppose that  there is a vertex $y$ in $V(H_1')$ with a neighbour $u'$ on $C$. Since $H_1'$ is a subgraph of connected graph $H_1$, there is an edge $v_ix$ between $V(H_1')$ and $\{v_0,v_1,\ldots,v_{k-t}\}$ for some $i$ satisfying $0\le i\le k-t$. Since~$x$ and $y$ are vertices of $V(H_1')$ there exists a path $xP'y$  from $x$ to $y$ in $H_1'$. 
If $u'$ is on the segment $u_j^+\overrightarrow{C}u_{j+1}^-$ for some $j$ satisfying $1\le j\le t$, then 
\[
u_jPv_ixP'yu'\overrightarrow{C}u_j 
\text{~or~}
u_j\overrightarrow{C}u'yP'xv_iPv_0u_2Cu_j
\] 
is a longer cycle. Since otherwise $v(u_j^+\overrightarrow{C}(u')^-)\geq k-t-i+2$ and $v((u')^+\overrightarrow{C}u_{j+1}^-)\geq i+2$ contradicting to~$k-t+2\geq v(u_j^+\overrightarrow{C}u_{j+1}^-)=v(u_j^+\overrightarrow{C}(u')^-) +1+v((u')^+\overrightarrow{C}u_{j+1}^-)\geq k-t+5$. 
Moreover, if $u'=u_j$ for some $j$ satisfying~$3\le j\le t$, then
\[
u_{j-1}v_0Pv_ixP'yu_j{\overrightarrow{C}}u_{j-1}
\] 
is a longer cycle, a contradiction. 
Hence, $u'\in\{u_1,u_2\}$ and $N_{G}(V(H_1'))\subseteq \{v_0,v_1,\dots,v_{k-t},u_1,u_2\}$. 
Furthermore, if $t\ge 3$, $u'\not=u_2$ and $N_{G}(V(H_1'))\subseteq \{v_0,v_1,\dots,v_{k-t},u_1\}$. For otherwise, 
\[
u_3v_0Pv_ixP'yu_2\overleftarrow{C}u_3~~ or~~ u_1Pv_ixP'yu_2\overrightarrow{C}u_1
\]
is a longer cycle, a contradiction. (If it is not the first case, then $i=0$ and $v(u_1^+\overrightarrow{C}u_2^-)=k-t+1$)

Observe that no two consecutive vertices of the path $u_2vPu_1$ are incident to a vertex of $V(H_1')$. By the minimum degree condition, we have~$|V(H_1')|>1$ since $k\geq 3$. Since $G$ is $2$-connected, there are at least two independent edges between $\{v_0,\cdots,v_{k-t},u_1,u_2\}$ and $V(H_1')$. Note that $u_2,v_0,\cdots,v_{k-t},u_1$ is a path, for the technical reasons we denote $v_{-1}:=u_2$ and $v_{k-t+1}:=u_1$. 
From all such pairs of edges, we choose two independent edges $x_1v_i$ and $x_2v_j$ minimizing $j-i$ if $H_1'$ is $2$-connected. Otherwise we still minimize $j-i$ such that that $x_1$ is in one of the $2$-connected blocks of $H_1'$ containing exactly one cut vertex $x'$ of $H_1'$ denote by $B_1'$. 
The vertex $x_2$ is in any other $2$-connected blocks  of $H_1'$.  
From minimality of $j-i$, vertices  of $V(B_1'\setminus\{x'\})$ 
are not incident with vertices from~$\{v_{i+1},\cdots,v_{j-1}\}$. 
Every vertex of~$B_1'\setminus\{x'\}$ has degree at least $k$ in $G$. 
On the other hand they are incident with vertices from $V(B_1')$ and 
$\{v_{-1},\cdots,v_{k-t},v_{k-t+1}\} \setminus  \{v_{i+1},\cdots,v_{j-1}\}$.
Hence we have the degree of vertices $B_1'\setminus\{x'\}$  in $B_1'$ is  at least 
\[
k-\left\lceil\frac{i+2}{2}\right\rceil-\left\lceil\frac{k-t-j+2}{2}\right\rceil\ge j-i-1.
\]
Note that at least one of the vertices of $\{v_i,v_j\}$ is not from $\{u_1,u_2\}$, since $H_1'$ is subgraph of connected $H_1$. 
By Lemma \ref{lemmma:Erdos_Gallai_min_degree_k_path}, there is a path $x_1P_1x'$ in  the block $B_1'$ of length  at least $j-i-1$. Therefore there is a path $x_1P''x_2$ of length at least $j-i+1$ in $H_1'$, a contradiction to the maximality of the broom B. Since by exchanging $v_iPv_j$ with $v_i x_1P''x_2v_j$,  we would get a bigger broom. Therefore we have $V(H_1)=\{v_0,v_1,\ldots,v_{k-t}\}$

Here we show $N(v_i)\subseteq V(H_1)\cup \{u_1,\cdots,u_t\}$ for $0\le i\le k-t$. The statement holds for $v_0$, suppose some $v_i$ is adjacent to a vertex $u'$ which is on some segment $u_j^+\overrightarrow{C}u_{j+1}^-$. Then one of the following cycles is longer than $C$
\[
u'\overrightarrow{C}u_j Pv_i u' 
\text{~or~}
u_{j+1}\overrightarrow{C}u'v_iPv_0 u_{j+1},
\] 
a contradiction. Hence we have $N(v_i)\subseteq V(H_1)\cup \{u_1,\cdots,u_t\}$ for $0\le i\le k-t$.
From the minimum degree condition we have $k\leq d_G(v_i)\leq (v(H_1)-1)+t= k$. Hence 
$H_1$ is a clique and each vertex of $H_1$ is incident with all vertices in $\{u_1,\cdots,u_t\}$. 

If $t=2$, we have~$H_1$ is a copy of $K_{k-1}$ and each vertex is adjacent to both $u_1,u_2$. Therefore we are done in this case. 

If $t=3$, then consider the following cycle 
\[
u_3\overrightarrow{C}u_2v_{k-t}Pv_0u_3.
\]
Since the length of it is not greater than $C$ and $v(u_2^+\overrightarrow{C}u_3^-)\leq 2$, we have $k-t=1$ and the segment $u_2^+\overrightarrow{C}u_3^-$ contains exactly two vertices(This means $c(G)=2k+1$). From here it is straightforward to check that $G\subseteq K_3+(\frac{n-3}{2}K_2)$.

If $k>t\ge 4$, one of segment $u_{2}^+\overrightarrow{C}u_{3}^-$ or $u_{3}^+\overrightarrow{C}u_{4}^-$ contains one vertex. Without loss of generality we may assume~$v(u_{2}^+\overrightarrow{C}u_{3}^-)=1$. Therefore the cycle $u_3\overrightarrow{C}u_2 v_{k-t}Pv_0 u_3$ is a longer cycle than $C$, a contradiction.
\end{proof}

From Claim \ref{Claim:H_1_structure} we have either~$G\subseteq K_3+(\frac{n-3}{2}K_2)$ and $k=4$ or $G$ contains a longest cycle $C$ and each connected component of~$G-C$ is either a vertex  and adjacent to $k$ vertices on $C$, or a clique of size ${k-1}$ and all vertices of the clique are adjacent  to the same two vertices of $C$. If $H_i$ is a $(k-1)$-clique, we call the two neighbors of $H_i$ lying on $C$ the attached point.

First consider  that each $H_i$ is a clique of size $k-1$  and let $w_i,w_i'$ denote the two attached points of $H_i$ for all $1\leq i \leq s$.   If $c(G)=2k$, one can easily check that~$v(w_i^+\overrightarrow{C}w_i'^-)=v(w_i'^+\overrightarrow{C}w_i^-)=k-1$, $\{w_i,w_i'\}=\{w_1,w_1'\}$ and $w_i^+\overrightarrow{C}w_i'^-, w_i'^+\overrightarrow{C}w_i^-$ are both a copy of $K_{k-1}$, hence $G\in \mathcal{H}_{2,n,k}$. 
When $c(G)=2k+1$, by Claim~\ref{Claim:H_1_structure}, we say the segment~$w_1\overrightarrow{C}w_1'$ contains $k$ vertices. If $s=1$, then $G\subseteq  K_2+(K_k\cup 2K_{k-1})$. If $s\ge 2$, then since $c(G)=2k+1$,  we have $\{ w_1,w_1'\}\cap \{ w_i,w_i'\}\neq \emptyset$ for any $2\le i\le s$.   Therefore either all $H_i$ have the same two attached points $\{w_1,w_1'\}$ on $C$ and we can see the segment $w_1\overrightarrow{C}w_1'$ as a subgraph of $K_k$ and we obtain $G\subseteq  K_2+(K_k\cup \frac{n-k-2}{k-1}K_{k-1})$. Or there are two of them such that their neighbours on $C$ are $w_1,w_1'$ and $w_1,w_1'^-$ and $G\in \mathcal{H}_{1,n,k}$, this finishes the proof in this case.

Next consider the case there is a component of $G-C$ of size one. Let us denote this vertex by $v$. The vertex $v$ has $k$ neighbours on the cycle $C$ and set $N(v)=\{u_1,\ldots,u_k\}$. Even more the distance between any two consecutive neighbours of $v$ is exactly two if $c(G)=2k$ and with one has distance three if $c(G)=2k+1$(if in such case, we assume $u_1\overrightarrow{C}u_2$ is of distance 3). It is easy to see that for any other components of size $1$, they have the same neighborhood with $v$ since $C$ is the longest. First assume there is no other component of size $k-1$. 
If $c(G)=2k$, then $V(C)-N(v)$ is independent and hence $G\subseteq H(n,2k)$. If $c(G)=2k+1$, then $V(C)-N(v)$ contains exactly one edge which lies on the segment of distance $3$ between two consecutive neighbours of $v$. Hence $G\subseteq H(n,2k+1)$. 

Hence we may assume that some component are $(k-1)$-cliques with $k\ge 3$, saying $H_i$ is one of such component with two attach points $\{u',u''\}$. If one of the attached points of $H_i$ lies on $u_i^+\overrightarrow{C}u^-_{i+1}$, we will find a longer cycle using $H_i$, a contradiction. Thus $\{u',u''\}\subseteq N(v)$ and we set $u'=u_a,~u''=u_b$ with $a,b\in  [k]$. If $k\ge 4$, then by the distance of $u'\overrightarrow{C}u''$, we know $u_{b-1}\not=u_a$ and $u_{a+1}\not=u_{b}$. We have $u_aH_iu_b\overleftarrow{C}u_{a+1}vu_{b-1}\overrightarrow{C}u_a$ is a longer cycle, a contradiction. Then $k=3$ and it is easy to see that $c(G)=7$ and  $u_a=u_1,~u_b=u_2$. That is $G-\{u_1,u_2\}$ is the disjoint union of a star and matching, $G=K_2+(S_{n-3-2t}\cup tK_2)$.
This finishes the proof of Theorem \ref{thm:Main_Dirac}.
 $\hfill\blacksquare$

\section{The applications for
generalized Tur\'an problems}

In this chapter we present some applications of   Theorem \ref{thm:Main_Dirac}. In particular we determine the exact value of the generalized Tur\'an number of pentagons or $C_4$ in graphs with bounded circumference and give a new proof of Theorem \ref{Thm:Luo}.

\subsection*{Proof of Theorem \ref{thm:c_5}.}

Throughout this subsection we denote~$\lfloor\ell/2\rfloor:=k$ and $\lambda:=\ell-2k$.

\begin{lemma}\label{Lemma_C_5_extremal_numbers}
Let $F$ be a graph isomorphic to an $n$-vertex graph from the following set
\[
\left\{H(n,\ell),~K_2+(K_k\cup bK_{k-1}),~K_3+\frac{n-3}{2}K_2,K_2+(S_{n-3-2t}\cup tK_2)\right\}\cup \mathcal{H}_{1,n,k}\cup \mathcal{H}_{2,n,k}. 
\] 
We have 
\begin{itemize}
  \item If $\ell\ge 6$ and $n\ge 3k$,
\[C_5(F)\leq C_5(H(n,\ell)).
\]

The equality holds if and only if $F=H(n,\ell)$.
  \item~If $\ell=5$ and $n\ge 7$, then $F\in \mathcal{H}_{1,n,k}$ with parameters $\floor{\frac{n-3}{2}}$ and $\ceil{\frac{n-3}{2}}$ contains most $C_5$.
  \end{itemize}

\end{lemma}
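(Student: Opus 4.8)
The plan is to evaluate $C_5(F)$ for each graph $F$ in the displayed set as an explicit polynomial in $n$ and to compare it with $C_5(H(n,\ell))$. Every graph in the set decomposes into a small \emph{core} of (near-)universal vertices together with a \emph{periphery} that is either an independent set or a disjoint union of small cliques. Because $C_5$ is not bipartite, an independent periphery contributes at most two pairwise non-adjacent vertices to a five-cycle, and in a clique periphery consecutive cycle-vertices must share a block while each passage between distinct blocks uses the core. Thus a five-cycle is pinned down by the core/periphery split of its five vertices and by the at most two blocks it meets, so each $C_5(F)$ is a degree-two polynomial in $n$; the comparison reduces to the coefficient of $n^2$, with lower-order terms entering only to force strictness and to settle the exceptional ties.

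First I would compute $C_5(H(n,\ell))$. Writing $m:=n-k$ and taking $\ell=2k$, each five-cycle uses $j\in\{3,4,5\}$ of the $k$ core vertices; the five chosen vertices induce a complete graph when $j\ge 4$ and $K_5$ minus an edge when $j=3$, with $12$ and $6$ Hamiltonian cycles respectively, so
\[
C_5(H(n,2k))=6\binom{k}{3}\binom{m}{2}+12\binom{k}{4}m+12\binom{k}{5},
\]
whose leading coefficient is $3\binom{k}{3}=\tfrac{1}{2}k(k-1)(k-2)$. When $\ell=2k+1$ the single extra edge contributes only the five-cycles through it, an $O(n)$ term, so the leading coefficient is unchanged.

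Next I would show that every competitor except one has $n^2$-coefficient strictly below $3\binom{k}{3}$ for $k\ge 3$. The graphs $K_2+(K_k\cup bK_{k-1})$, $\mathcal{H}_{1,n,k}$ and $\mathcal{H}_{2,n,k}$ have genuine core of size two (the apex $K_2$, resp.\ the pair $\{u_i,u\}$ inside each cluster), so their dominant five-cycles look like $u_1\,p\,u_2\,q\,r$ with $p$ in one block and $qr$ an edge of another; summing over blocks gives $n^2$-coefficient $k-2$ for the first two and one of the same order for $\mathcal{H}_{1,n,k}$, in every case less than $\tfrac{1}{2}k(k-1)(k-2)$ for $k\ge 3$. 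The sporadic graph $K_3+\tfrac{n-3}{2}K_2$ (relevant only at $k=4$) has core $K_3$ and hence coefficient a constant below $3\binom{4}{3}=12$. Since $C_5(H(n,\ell))$ also carries large positive lower-order terms while the competitors' are comparatively small, the $n^2$-gap already dominates for $n\ge 3k$, giving $C_5(F)<C_5(H(n,\ell))$ and the uniqueness of $H(n,\ell)$.

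The hard part is the remaining graph $K_2+(S_{n-3-2t}\cup tK_2)$ at $k=3$: the star center $c$ together with $\{u_1,u_2\}$ forms a triangle all of whose vertices are adjacent to the $L:=n-3-2t$ leaves, so this piece is a copy of $K_3+I_L$ and its $n^2$-coefficient \emph{equals} that of $H(n,7)$. The required inequality is therefore a linear-order statement, and to prove it I would enumerate all five-cycles honestly — those inside the hidden $K_3+I_L$, those combining a leaf with a matching edge, and those using two matching edges — write $C_5$ as a polynomial in $n$ and $t$, and optimize over $t$, using that $c(G)=7$ forces $t\ge 1$ and hence $L\le n-5$; this forced decrease of $L$ is exactly what makes the linear term of $H(n,7)$ win. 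Finally, for $\ell=5$ one has $k=2$ and $\binom{k}{3}=0$: $H(n,5)$ and $K_2+(K_2\cup bK_1)$ have only $O(n)$ five-cycles and both members of $\mathcal{H}_{2,n,2}$ have none, whereas in $\mathcal{H}_{1,n,2}$ each pair of leaves $a,c$ from the two clusters extends to the unique five-cycle $a\,u\,c\,u_2\,u_1$, giving $b_1b_2$ in total; as $b_1+b_2=n-3$ is fixed this is maximized at the balanced split $b_1=\floor{\tfrac{n-3}{2}},\,b_2=\ceil{\tfrac{n-3}{2}}$, which for large $n$ overtakes the linear counts of the other members.
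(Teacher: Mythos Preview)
Your plan is correct and follows the same underlying method as the paper: compute (or bound) $C_5(F)$ for each candidate $F$ as a polynomial in $n$ and compare to the explicit formula for $C_5(H(n,\ell))$. The paper simply writes down each $C_5(F)$ as a closed expression and asserts the inequality; you add a helpful layer of organization by first isolating the $n^2$-coefficient, which is $3\binom{k}{3}$ for $H(n,\ell)$ but only of order $k$ for the clique-periphery families $\mathcal{H}_{1,n,k}$, $\mathcal{H}_{2,n,k}$, $K_2+(K_k\cup bK_{k-1})$ and the sporadic $k=4$ graph, so those cases are disposed of immediately once $n\ge 3k$. You also correctly single out the graph $K_2+(S_{n-3-2t}\cup tK_2)$ at $k=3$ as the only competitor whose $n^2$-coefficient \emph{matches} that of $H(n,7)$ (via the hidden $K_3+I_L$), so that a genuine linear-order comparison is needed there---a subtlety the paper's proof does not flag. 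Both your sketch and the paper leave the final numerical verification of the inequalities at the threshold $n\ge 3k$ (resp.\ $n\ge 7$) as routine calculation; to complete the proof you would still need to carry out the honest enumeration for the $k=3$ star-plus-matching case and check the constants, exactly as you say.
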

\begin{proof}
It is straightforward to determine the number of five cycles in $H(n,\ell)$.
\begin{align}\label{euqtion:4}
C_5(H(n,\ell))=&\binom{n-k}{2}\binom{k}{3}\cdot 3\cdot2+(n-k)\binom{k}{4}\binom{4}{2}\cdot2+\binom{k}{5}\frac{5!}{10}\notag\\
&+\lambda\left\{(n-k-2)\binom{k}{2}\cdot2+\binom{k}{3}\cdot3\cdot2\right\}.
\end{align}
Suppose $F\in \mathcal{H}_{1,n,k}$, with parameters $b_1$ and $b_2$. If $\ell\ge 6$ and $n\ge 3k$, then the number of pentagons in $F$ is 
 \begin{align*}
C_5(F)&= \frac{n-3}{k-1}\binom{k+1}{5}\frac{5!}{10}+2\left(\binom{b_1}{2}+\binom{b_2}{2}\right)\binom{k-1}{2}(k-1)\\
&+2(b_1+b_2)\binom{k-1}{2} +b_1b_2(k-1)^2\\
&\le \frac{n-3}{k-1}\binom{k+1}{5}\frac{5!}{10}+2\binom{\frac{n-3}{k-1}}{2}\binom{k-1}{2}(k-1)<C_5(H(n,\ell)).
\end{align*}
If $\ell=5$, then $C_5(F)=b_1b_2$. It is easy to see when $b_1=\floor{\frac{n-3}{2}}$ and $b_2=\ceil{\frac{n-3}{2}}$, $C_5(F)$ attains maximum, which is greater than $C_5(H(n,5))=2(n-4)$.

If $F\in \mathcal{H}_{2,n,k}$,  then the number of pentagons in $F$ is 
\begin{align*}
C_5(F)=\frac{n-2}{k-1}\binom{k+1}{5}\frac{5!}{10}+2\binom{\frac{n-2}{k-1}}{2}\binom{k-1}{2}(k-1)< C_5(H(n,\ell)).
\end{align*}
If $F= K_2+(K_k\cup bK_{k-1})$  with parameters $b_1$ and $b_2$,  then the number of pentagons in $F$ is 
\begin{align*}
C_5(F)= &\left(b\binom{k+1}{5}+\binom{k+2}{5}\right)\frac{5!}{10}+2\binom{b}{2}\binom{k-1}{2}(k-1)+2\left(kb\binom{k-1}{2}+b\binom{k}{2}(k-1)\right)\\
< &C_5(H(n,\ell)).
\end{align*}
If $F=K_3+\frac{n-3}{2}K_2$,  then the number of pentagons in $F$ is 
\begin{align*}
C_5(F)=\frac{n-3}{2}\frac{5!}{10}+2\binom{\frac{n-3}{2}}{2}(2*3*2)+\binom{\frac{n-3}{2}}{2}(2*2*3*2)<C_5(H(n,\ell)).
\end{align*}
If $F=K_2+(S_{n-3-2t}\cup tK_2)$, then the number of pentagons in $F$ is
\[
2\binom{t}{2}*2*2+\binom{s}{2}(2+4)+4ts+2(s+1)t<C_5(H(n,\ell))
\]
\end{proof}

\begin{lemma}\label{lem3}
Let $G$ be a $2$-connected $\C_{\ge \ell+1}$-free graph with $n$ vertices, such that~$n\ge 3k$.  For a vertex $v$ of $G$ with degree $d(v)\le k-1$, we have 
\[
C_5(v)\le k(k-2)^2n-\frac{1}{2}k^2(k-2)^2.
\]
\end{lemma}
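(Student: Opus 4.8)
\emph{Proposal.} The plan is to count the five-cycles through $v$ by exploiting that such a cycle is short while $v$ has few neighbours, using the circumference bound $c(G)\le \ell\le 2k+1$ (valid since $G$ is $\C_{\ge \ell+1}$-free and $k=\lfloor \ell/2\rfloor$) to control the second neighbourhood of $v$. Every five-cycle through $v$ has the shape $v\,a\,x\,y\,b\,v$ with $a,b\in N(v)$ distinct and $a\,x\,y\,b$ a three-edge path avoiding $v$. Counting by the \emph{middle edge} $xy$ (the unique edge of the cycle at distance two from $v$ on both sides), one gets $C_5(v)=\sum_{xy\in E(G-v)}M(x,y)$, where $M(x,y)$ is the number of ordered pairs $(a,b)$ with $a\in N(x)\cap N(v)$, $b\in N(y)\cap N(v)$, $a\ne b$, and $\{a,b\}\cap\{x,y\}=\emptyset$. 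Writing $g(w)=|N(w)\cap N(v)|\le d(v)\le k-1$, one has $M(x,y)\le g(x)g(y)$; but this crude bound loses too much, so the distinctness condition $a\ne b$ must be retained throughout.

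Next I would split the five-cycles according to how many of the two middle vertices $x,y$ lie outside $N(v)$, writing $L_2$ for the set of vertices adjacent to $N(v)$ but not in $N[v]$. Cycles with both middle vertices in $N(v)$ live inside $N[v]$, so their number is bounded by a function of $k$ alone and is absorbed into the additive term. The cycles with exactly one middle vertex outside are organised by that outside vertex $y\in L_2$: one chooses its two cycle-neighbours in $N(v)$ and the remaining cycle-neighbour of $v$, and using $g(w)\le k-1$ together with $|N(u)\cap N(v)|\le k-2$ for $u\in N(v)$, their number is at most $(k-2)\sum_{y\in L_2}g(y)(g(y)-1)\le (k-1)(k-2)^2 n$, after bounding each codegree $|N(a)\cap N(b)\cap L_2|$ by $n$ and summing over the at most $(k-1)(k-2)$ ordered pairs $a\ne b$ in $N(v)$.

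The remaining, and hardest, case is when \emph{both} middle vertices lie in $L_2$, so the cycle is $v\,a\,x\,y\,b\,v$ with $x,y\in L_2$ and $xy\in E(G[L_2])$. The naive estimate $\sum_{xy\in E(G[L_2])}g(x)g(y)$ is useless, because Erd\H{o}s--Gallai only gives $|E(G[L_2])|=O(kn)$ while the factors $g(x)g(y)$ can be as large as $(k-1)^2$. The essential device is to close paths through $v$: if a path has all its interior vertices in $L_2$ and its two ends adjacent to two \emph{distinct} vertices of $N(v)$, then routing it back through $v$ yields a cycle exactly two longer, hence of length $\le 2k+1$; so such a path has at most $2k-2$ interior vertices. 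In particular one cannot have two disjoint, densely interconnected blocks of $L_2$ each richly attached to $N(v)$, since concatenating them through $v$ would produce a cycle longer than $2k+1$. Quantifying this forces the both-outside contribution down to $O(k^2n)$, in fact to at most $(k-2)^2 n$, and adding the two linear terms gives $(k-1)(k-2)^2 n+(k-2)^2 n=k(k-2)^2 n$.

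The main obstacle is precisely this last bound: converting the qualitative ``no two richly-attached dense blocks of $L_2$'' statement into the clean coefficient $(k-2)^2$ for the both-outside case, and then tracking the $O(k)$ vertices of $N[v]$ excluded from each codegree estimate to upgrade the crude inequalities $(\text{codegree})\le n$ into the exact additive correction $-\tfrac12 k^2(k-2)^2$. The inputs $g(w)\le k-1$, $|N(u)\cap N(v)|\le k-2$ for $u\in N(v)$, and $|N(v)|\le k-1$ are what feed the factors $k$, $k-1$, $k-2$ into the final expression, and the hypothesis $n\ge 3k$ ensures that $L_2$ is large enough for the linear term to dominate, so that the quantity subtracted is an honest lower-order correction rather than an artefact of small cases.
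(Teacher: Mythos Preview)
Your decomposition by the position of the two middle vertices coincides with the paper's Type-$0/1/2$ classification, and your bounds for the ``both inside'' and ``one outside'' cases are fine (if generous). The genuine gap is the ``both outside'' case. You assert that the path-closing observation --- a path inside $L_2$ whose ends reach two distinct vertices of $N(v)$ can be routed through $v$ to a cycle, hence has at most $2k-2$ interior vertices --- ``forces the both-outside contribution down to at most $(k-2)^2 n$''. But bounding the \emph{length} of such paths does not by itself bound $\sum_{xy\in E(G[L_2])} g(x)g(y)$: there can still be $\Theta(kn)$ edges in $G[L_2]$, each carrying a factor up to $(k-1)^2$, and nothing you wrote reduces that factor to $(k-2)^2$ or compensates via the edge count. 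You acknowledge this yourself in the final paragraph; as it stands the proof is incomplete at exactly the point where the main term is produced.

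The paper's argument avoids this difficulty by a different mechanism. It does \emph{not} try to beat the factor $(k-1)^2$ by a circumference argument on paths in $L_2$. Instead it uses Kopylov's theorem once to get $e_1+e_2+e_3\le k(n-k)+\binom{k}{2}+\lambda$, and observes that if every vertex of $N_2(v)$ has at most $k-2$ neighbours in $N(v)$ then each type has multiplier at most $(k-2)^2$, so $C_5(v)\le (e_1+e_2+e_3)(k-2)^2$ gives the bound immediately. The only remaining case is when some $w\in N_2(v)$ has $k-1$ neighbours in $N(v)$, which forces $d(v)=k-1$; one then sets $A=\{w\in N_2(v):|N(w)\cap N(v)|\ge k-2\}$ and shows, from the circumference bound, that $G[A]$ is $P_4$-free (and $P_3\cup P_2$-free, $3P_2$-free) when $\ell=2k$, or $P_5$-free etc.\ when $\ell=2k+1$, whence $e(G[A])\le n-k+\lambda$. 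Splitting $e_3=e_3'+e_3''$ accordingly and using multipliers $(k-1)(k-2)$ on $e_3'$ and $(k-3)(k-2)$ on $e_3''$ finishes the computation. So the ``path'' idea does appear, but only inside the high-codegree set $A$, where it yields a linear edge bound rather than a bound on a weighted sum over all of $L_2$.
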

\begin{proof}
We denote  the set of vertices $V(G)-N[v]$ by $N_2(v)$. Let $e_1$ be the number of edges in $G[N(v)]$, $e_2$ be the number of edges between the sets of vertices $N(v)$ and $N_2(v)$ and $e_3$ be the number of edges in $G[N_2(v)]$ respectively. 
Since $G$ is a $2$-connected $\C_{\ge \ell+1}$-free  graph with $n$-vertices such that~$n\ge 3k$, by Theorem \ref{Thm:Kopylov} we have
\begin{align}\label{2}
e_1+e_2+e_3\le k(n-k)+\binom{k}{2}+\lambda.    
\end{align}

Here we classify pentagons $vv_1v_2v_3v_4v$ incident with the vertex $v$ in $G$. We say $vv_1v_2v_3v_4v$ is Type-$i$ if $i=\abs{\{v_2,v_3\}\cap N(v)}$.

In this paragraph, we estimate  the  maximum number of  Type-2 pentagons. There are at most $e_1$ choices representing an edge $v_2v_3$. After fixing such an edge, there are at most $\binom{\abs{N(v)}-2}{2}$ choices for the pair of vertices $v_1$ and $v_4$. Hence the number of Type-2 pentagons in $G$ is at most 
\[
e_1\binom{d(v)-2}{2}\cdot2\le e_1(k-3)(k-4).
\]

Here we estimate  the  maximum number of  Type-1 pentagons. Note that the opposite edge of $v$ in the pentagon must be between~$N(v)$ and $N_2(v)$. Hence there are at most $e_2$ choices for such an edge. After fixing such an edge there are $\binom{\abs{N(v)}-1}{2}$ choices for vertices $v_1$ and $v_2$. Hence the number of Type-1 pentagons in $G$ is at most 
\[
e_2\binom{d(v)-1}{2}\cdot2\le e_2(k-2)(k-3).
\]
Here we estimate  the  maximum number of  Type-0 pentagons.  If each vertex of $N_2(v)$ has at most $k-2$ neighbors in $N(v)$, then the   number of Type-0 pentagons in $G$ is at most 
$e_3(k-2)(k-2)$.
Therefore, by inequality~\eqref{2}, we have 
\begin{align*}
 C_5(v)&\le (e_1+e_2+e_3)(k-2)^2\le \left(k(n-k)+\binom{k}{2}+\lambda\right)(k-2)^2\\
&\le k(k-2)^2n-\frac{1}{2}k^2(k-2)^2.
\end{align*}

If there is a vertex of $N_2(v)$ with $k-1$ neighbors in $N(v)$, then we have $d(v)=k-1$. We partition~$N_2(v)$ into two sets $A$ and $B$. Such that $A$ contains  all vertices in $N_2(v)$ with at least $k-2$ neighbors in $N(v)$. The set of remaining vertices $N_2(v)\setminus A$ is denoted by $B$.  
Let $e_3'$ denote the number of edges in $G[A]$ and $e_3'':=e_3-e_3'$. In particular $e_3''$ denotes number of edges in~$N_2(v)$ incident with at least one vertex from $B$.
The number of Type-0 pentagons is at most
\[
e_3'(k-1)(k-2)+e_3''(k-3)(k-2)
\]
and 
\begin{align}\label{3}
C_5(v)\le e_3'(k-1)(k-2)+(e_1+e_2+e_3'')(k-2)(k-3).   
\end{align}

If $|A|\le k+1$, we have $e_3'\le \binom{k+1}{2}.$
By inequality~\eqref{2} and the  above inequality we have 
\begin{align*}
C_5(v)&\le \binom{k+1}{2}(k-1)(k-2)+k(n-k)(k-2)(k-3)\\
&= k(k-2)(k-3)n-\frac{k^4}{2}+4k^3-\frac{13k^2}{2}+k
\\
& \le k(k-2)^2n-\frac{1}{2}k^2(k-2)^2.
\end{align*}

If $|A|\ge k+2$ then we distinguish two cases for estimating $e_3'$ depending on the value of $\lambda$. If $\ell=2k$, then $G[A]$ is~$P_4$-free, $P_3\cup P_2$-free and $3P_2$-free since $G$ is $C_{\ell+1}$-free. 

This implies $e(G[A])=e_3'\le n-k-1$. If $\ell=2k+1$,  then~$G[A]$ is $P_5$-free, $P_4\cup P_2$-free, $P_3\cup2P_2$-free, $2P_3$-free and~$4P_2$-free. Which implies~$e_3'\le n-k$.
Hence by inequality~\eqref{2} and the inequality~\eqref{3}  we have
\begin{align*}
C_5(v)&\le(n-k)(k-1)(k-2)+\left(nk-\frac{k(k+1)}{2}-(n-k)+1\right)(k-2)(k-3)\\
&=k(k-2)(k-3)n+2(k-2)(n-k)-\frac{1}{2}(k+1)k(k-2)(k-3)+(k-2)(k-3)\\
&\le k(k-2)^2n-\frac{1}{2}k^2(k-2)^2.
\end{align*}
We are done. \end{proof}

Here we finish the proof of Theorem \ref{thm:c_5} by means of  progressive induction. Let $G_n$ denote  an extremal graph of $ex(n,C_5,\C_{\ge \ell+1})$. We may assume $G_n$ is connected.  First we prove the case $\ell\ge 6$.   Let us define the following function.
\[
\phi(n)=\ex(n,C_5, \C_{\ge \ell+1})-C_5(H(n,\ell)).
\]
Note that $\phi(n)=C_5(G_n)-C_5(H(n,\ell))$ and it is a non-negative integer. 
In the following claim we find an upper-bound for $\phi(n)$.
\begin{claim}\label{claim1}
 For all $n$ such that~$n\ge 100k$, either $G_n=H(n,\ell)$, or 
 \[
\phi(n) \leq  \phi(n-1)- k(k-2)(n-4k).
\]
\end{claim}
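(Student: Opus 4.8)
The plan is to run the progressive induction by examining the minimum degree of the extremal graph. Set $\delta:=\delta(G_n)$ and fix a vertex $v$ realizing it. Everything hinges on the splitting $C_5(G_n)=C_5(G_n-v)+C_5(v)$, where $C_5(v)$ counts the pentagons through $v$. Since $G_n-v$ is an $(n-1)$-vertex $\C_{\ge \ell+1}$-free graph, $C_5(G_n-v)\le \ex(n-1,C_5,\C_{\ge \ell+1})=\phi(n-1)+C_5(H(n-1,\ell))$, and therefore
\[
\phi(n)\le \phi(n-1)+C_5(v)-\bigl(C_5(H(n,\ell))-C_5(H(n-1,\ell))\bigr).
\]
The two alternatives of the claim correspond to the two regimes $\delta\ge k$ and $\delta\le k-1$.

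In the regime $\delta\ge k$ I would show $G_n=H(n,\ell)$. After reducing to the case that $G_n$ is $2$-connected (a bridge contributes a degree-one vertex lying in no pentagon and is disposed of directly, and for a genuine cut vertex one argues blockwise, using that a pentagon never crosses a cut vertex), Dirac's Theorem~\ref{thm:Dirac} forces $\delta=k$: indeed $\delta\ge k+1$ would give $c(G_n)\ge\min\{n,2k+2\}>\ell$ because $n\ge 100k$. With $\delta(G_n)=k$, $G_n$ $2$-connected, and $c(G_n)\le \ell\le 2k+1$, Theorem~\ref{thm:Main_Dirac} applies and embeds $G_n$ into one of its listed configurations, which are precisely the graphs $F$ covered by Lemma~\ref{Lemma_C_5_extremal_numbers}. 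That lemma yields $C_5(G_n)\le C_5(H(n,\ell))$ with equality only at $H(n,\ell)$; since $H(n,\ell)$ is itself $\C_{\ge\ell+1}$-free, extremality gives the reverse inequality, so $G_n=H(n,\ell)$.

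In the regime $\delta\le k-1$ I would derive the recursive bound. As $G_n$ is $2$-connected and $n\ge 3k$, Lemma~\ref{lem3} gives $C_5(v)\le k(k-2)^2 n-\tfrac12 k^2(k-2)^2$. From \eqref{euqtion:4} the finite difference is
\[
C_5(H(n,\ell))-C_5(H(n-1,\ell))=(n-k-1)k(k-1)(k-2)+12\binom{k}{4}+\lambda k(k-1),
\]
whose coefficient of $n$ is $k(k-1)(k-2)$. Feeding both estimates into the displayed inequality, the coefficient of $n$ becomes $k(k-2)^2-k(k-1)(k-2)=-k(k-2)$, exactly the target slope. The decisive point is that the $n$-free remainder, although built from the $\Theta(k^4)$ quantities $(k+1)k(k-1)(k-2)$, $-\tfrac12 k^2(k-2)^2$ and $-12\binom{k}{4}$, has all its $k^4$ contributions cancel, collapsing to a cubic in $k$; for every $k\ge 2$ and both $\lambda\in\{0,1\}$ this cubic is bounded by the available slack $4k^2(k-2)$. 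This is precisely $\phi(n)\le\phi(n-1)-k(k-2)(n-4k)$.

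The main obstacle is the arithmetic in this last regime: Lemma~\ref{lem3} is lossy while \eqref{euqtion:4} is exact, yet each contributes a quartic-in-$k$ error, so the recursion closes only because these quartics annihilate and the residual cubic stays under $4k^2(k-2)$ uniformly in $k$ and in the parity $\lambda$; tracking the signs of this cancellation is the delicate step. A secondary, more structural obstacle is the reduction to $2$-connected $G_n$, needed both for Lemma~\ref{lem3} (which rests on Kopylov's Theorem~\ref{Thm:Kopylov}) and for invoking Theorem~\ref{thm:Main_Dirac}; this I would settle with the end-block and cut-vertex analysis sketched above.
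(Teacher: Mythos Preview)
Your overall architecture matches the paper's: split on whether $G_n$ is $2$-connected with $\delta(G_n)\ge k$ (and then invoke Theorem~\ref{thm:Main_Dirac} and Lemma~\ref{Lemma_C_5_extremal_numbers}), or exhibit a vertex whose removal costs at most $k(k-2)^2 n-\tfrac12 k^2(k-2)^2$ pentagons and combine with the finite difference~\eqref{1}. Your arithmetic in the latter case is correct, and indeed the quartic terms cancel exactly as you say.

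The genuine gap is the reduction to $2$-connectedness, which you flag but do not actually carry out. Your sketch (``a bridge contributes a degree-one vertex \dots\ for a genuine cut vertex one argues blockwise'') does not suffice. Since $\ell\ge 6$ and the extremal graph has no useless vertices, bridges with a pendant endpoint do not occur; and ``argue blockwise'' by itself only gives $C_5(G_n)=\sum_i C_5(B_i)$, which does not produce the recursive inequality. The paper needs real work here: if $G_n$ has a cut vertex, take two end-blocks $B_1,B_2$. If both satisfy $v(B_i)\ge 3k$ and $\delta(B_i)\ge k$, then Theorem~\ref{thm:Main_Dirac} plus Lemma~\ref{Lemma_C_5_extremal_numbers} forces $B_i=H(v(B_i),\ell)$, and now a \emph{convexity} step --- $C_5(H(m,\ell))+C_5(H(m',\ell))<C_5(H(m-1,\ell))+C_5(H(m'+1,\ell))$ --- contradicts extremality of $G_n$. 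Otherwise either some end-block is small ($v(B_1)\le 3k$, so any non-cut vertex in it lies on at most $12\binom{3k}{4}$ pentagons, which beats $k(k-2)^2(n-\tfrac{k}{2})$ once $n\ge 100k$), or some end-block has a vertex $v$ of degree $\le k-1$ inside it; if that vertex is the cut vertex $b_1$ itself, one cannot simply delete it --- the paper instead deletes only the edges from $b_1$ into $B_1$, bounds the destroyed pentagons via Lemma~\ref{lem3} applied to $B_1$, and then identifies a vertex from each resulting component to drop to $n-1$ vertices.

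None of these three ingredients --- the convexity comparison, the small-block count, the edge-deletion trick at the cut vertex --- appears in your sketch, and without them the case where $G_n$ is not $2$-connected (which can coexist with $\delta(G_n)\ge k$, so your dichotomy on $\delta$ does not sidestep it) is not handled.
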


\begin{proof}
By the definition of $\phi$ we have
\begin{align*}
\phi(n-1)-\phi(n)&= \left( C_5(H(n,\ell))-C_5(H(n-1,\ell))\right)-\left( C_5(G_n)-C_5(G_{n-1})\right).
\end{align*}
Therefore from equality~\eqref{euqtion:4}, we get
\begin{align}\label{1}
   C_5(H(n,\ell))-C_5(H(n-1,\ell))&=k(k-1)(k-2)\left(n-\frac{k+5}{2}\right)+\lambda k(k-1). 
\end{align}

If $G_n$ contains a cut vertex, let $B_1$ and $B_2$ be two end-blocks of $G_n$ with $|V(B_2)|\ge |V(B_1)|$ and let $b_1,b_2$ be the cut vertices of $B_1$ and $B_2$, respectively. 
At first we  assume $V(B_2)\ge|V(B_1)|\ge 3k$ and $\delta(B_i)\ge k$ for each $i=1,2$. 
Since each $B_i$ is $2$-connected, combining Theorem~\ref{thm:Main_Dirac} with Lemma~\ref{Lemma_C_5_extremal_numbers},   
we have $B_i=H(|V(B_i)|,\ell)$. 
A contradiction to the maximality of the number of pentagons in $G_n$, since we have 
\[
C_5(H(v(B_1),\ell))+C_5(H(v(B_2),\ell))< C_5(H(v(B_1)-1,\ell))+C_5(H(v(B_2)+1,\ell)),
\]
by convexity. Note that we could exchange $B_1$ and $B_2$ with $H(v(B_1)-1,\ell)$ and $H(v(B_2)+1,\ell)$ since they are the end-blocks.  
Hence, either $v(B_1)\le 3k$ or $\delta(B_i)\le k-1$ for some $B_i$.
If $v(B_1)\le 3k$ then let $v$ be a vertex other than $b_1$ in $B_1$, then since $n\ge 100 k$,
$$C_5(v)\le 12\binom{3k}{4}\le k(k-2)^2(n-\frac{k}{2}).$$ 
This implies
\begin{align}\label{5}
C_5(G_n)-C_5(G_{n-1})\le C_5(v)\le  k(k-2)^2(n-\frac{k}{2}).    
\end{align}
For the latter case $\delta(B_i)\le k-1$ for some $B_i$
without loss of generality, assume there is a vertex $v$ in $B_1$ such that $v$ has at most $k-1$ neighbors in $B_1$. 
If $v\not= b_1$, then since $B_1$ is $2$-connected and $v(B_1)\ge 3k$,  inequality~\eqref{5} holds by Lemma \ref{lem3}.
If $v=b_1$, we remove all  edges incident to $b_1$ in the subgraph $B_1$. We destroyed at most $k(k-2)^2n-\frac{1}{2}k^2(k-2)^2$ copies of $C_5$ by Lemma \ref{lem3}. Even more the resulting graph is disconnected graph on $n$ vertices. Therefore it contains at most~$C_5(G_{n-1})$ pentagons, since we could identify a vertex from each connected component. Thus  the inequality~\eqref{5} holds in this case too.

Combining equality~\eqref{1} and inequality~\eqref{5}, we get
\begin{align*}
\phi(n-1)-\phi(n)&\ge k(k-1)(k-2)\left(n-\frac{k+5}{2}\right)+\lambda k(k-1)- k(k-2)^2(n-\frac{k}{2})\\ 
&\ge  k(k-2)(n-4k),
\end{align*}
therefore we are done if $G_n$ is not $2$-connected.

If $G_n$ is $2$-connected and it contains a vertex~$v$  of degree at most $k-1$, then by Lemma \ref{lem3} we have~$C_5(v)\le k(k-2)^2n-\frac{1}{2}k^2(k-2)^2$, hence~$\phi(n)-\phi(n-1)\ge k(k-2)(n-4k)$ holds and we are done. 
If $\delta(G)\ge k$, then combining Theorem \ref{thm:Main_Dirac} and Lemma \ref{Lemma_C_5_extremal_numbers}, we have $G_n=H(n,\ell)$. 
\end{proof}

The function $\phi(n)$  is decreasing non-negative function. 
We have a trivial bound 
\[
\phi(100k)\le \binom{100k}{5}\frac{5!}{10}-C_5(H(100k,\ell))\le 10^9k^5.
\]

For each $n$ such that $n>100k$ we have either~$C_5(H(n,\ell))=ex(n, C_5,\C_{\ge \ell+1})$ and $\phi(n)=0$ or 
$\phi(n)\not=0$ and we have 
\[
\phi(n)\le\phi(100k)-k(k-2)\sum_{i=100k}^{n}(i-4k)\le 10^9k^5-\frac{n+92k}{2}(n-100k),
\]
by Claim \ref{claim1}. Therefore for all $n\ge 10^5k^{3/2}$ we have~$\phi(n)= 0$. Hence we have
 $ex(n, C_5,\C_{\ge \ell+1})=C_5(H(n,\ell))$.

Next we prove the special case when $\ell=5$ using  progressive induction. Note that   $k=2$.  Let the graph from $\mathcal{H}_{1,n,k}$ with parameters $\floor{\frac{n-3}{2}}, \ceil{\frac{n-3}{2}}$ be denoted by $F_n$ and $ \phi(n)=\ex(n,C_5, \C_{\ge \ell+1})-C_5(F_n)$.
\begin{claim}\label{claim3}
 For all $n$ such that~$n\ge 29$, either $G_n=F_n$, or 
 \[
\phi(n) \leq  \phi(n-1)- \floor{\frac{n-27}{2}}.
\]
\end{claim}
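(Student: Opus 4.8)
The plan is to mirror the progressive-induction argument of Claim \ref{claim1}, specialised to $k=2$ and $\lambda=1$. Writing $g(m):=C_5(F_m)=\floor{\frac{m-3}{2}}\ceil{\frac{m-3}{2}}=\floor{\frac{(m-3)^2}{4}}$ (using Lemma \ref{Lemma_C_5_extremal_numbers}), a direct computation gives the marginal value $g(n)-g(n-1)=\floor{\frac{n-3}{2}}$. Since $\phi(n-1)-\phi(n)=\bigl(C_5(F_n)-C_5(F_{n-1})\bigr)-\bigl(C_5(G_n)-C_5(G_{n-1})\bigr)$ and $\floor{\frac{n-3}{2}}-12=\floor{\frac{n-27}{2}}$, the claim is equivalent to proving that either $G_n=F_n$, or $C_5(G_n)-C_5(G_{n-1})\le 12$.

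First I would dispose of the $2$-connected case. As $G_n$ is $\C_{\ge 6}$-free and $n\ge 29$, Dirac's Theorem \ref{thm:Dirac} forbids $\delta(G_n)\ge 3$, so $\delta(G_n)=2=k$. Theorem \ref{thm:Main_Dirac} then confines $G_n$ to one of the listed families, and Lemma \ref{Lemma_C_5_extremal_numbers} (whose conclusion is valid for $n$ this large) singles out the balanced member of $\mathcal{H}_{1,n,2}$ as the unique pentagon-maximiser; hence $G_n=F_n$ and the first alternative holds.

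The core of the proof is the case where $G_n$ has a cut vertex. Here I would exploit that every cycle lies inside a single block, so the pentagon count is additive over blocks and, for a non-cut vertex $v$ of any end-block $B$, all pentagons through $v$ lie in $B$. Deleting such a $v$ leaves a $\C_{\ge 6}$-free graph on $n-1$ vertices, so $C_5(G_n)-C_5(G_{n-1})\le C_5(G_n)-C_5(G_n-v)=C_5(v)$; it therefore suffices to locate an end-block containing a non-cut vertex lying in at most $12$ pentagons. A short case analysis over the possible end-block types supplied by Theorem \ref{thm:Main_Dirac} does this in all but one situation: a bridge contributes a pendant vertex ($C_5=0$); a block on at most five vertices has $C_5(v)\le C_5(K_5)=12$; a block inside $H(m,5)$ or $K_2+(K_2\cup(m-4)K_1)$ has a degree-two vertex lying in at most two pentagons; a block of circumference four contains no pentagon at all; and a block equal to $\mathcal{H}_{1,m,2}$ has a leaf lying in $\min(b_1,b_2)$ pentagons, at most $12$ unless both parts exceed $12$.

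The one remaining configuration—every end-block a balanced copy of $\mathcal{H}_{1,m,2}$ with both parts at least $13$ (equivalently $m\ge 29$)—is where I expect the real difficulty, and I would rule it out by a convexity exchange as in Claim \ref{claim1}. By extremality each such block is balanced, so it contributes exactly $g(m)$ pentagons; replacing the two smallest end-blocks, of orders $m_1\le m_2$, by fresh balanced copies of $\mathcal{H}_{1,m_1-1,2}$ and $\mathcal{H}_{1,m_2+1,2}$ attached at the same cut vertices keeps the graph $\C_{\ge 6}$-free on $n$ vertices and changes the pentagon count by $g(m_1-1)+g(m_2+1)-g(m_1)-g(m_2)\ge 0$, contradicting the maximality of $C_5(G_n)$ as soon as this is strictly positive. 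The subtlety is that $g$ is only weakly convex, being the floor of a quadratic, so a single exchange can tie precisely when $m_1=m_2$ is odd; in that case shifting two vertices instead of one yields $g(m_1-2)+g(m_2+2)-g(m_1)-g(m_2)=2>0$, restoring the strict gain. This exhausts the cases and establishes $C_5(G_n)-C_5(G_{n-1})\le 12$, hence the claimed recursion. The points needing the most care are the block-additivity of pentagons, the per-type minimum counts over degree-two vertices, and the tie-breaking in the exchange.
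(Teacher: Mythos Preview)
Your proof is correct and follows essentially the same strategy as the paper's: in the $2$-connected case apply Theorem~\ref{thm:Main_Dirac} and Lemma~\ref{Lemma_C_5_extremal_numbers} to force $G_n=F_n$; otherwise either delete a non-cut vertex lying on at most $12$ pentagons, or reach a contradiction via a convexity exchange on two end-blocks.

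The only notable difference is in how the ``cheap vertex'' case is carved out. The paper simply splits on whether the smaller end-block $B_1$ has $v(B_1)\le 5$ (then any non-cut vertex of $B_1$ lies on at most $C_5(K_5)=12$ pentagons) or both end-blocks satisfy $v(B_i)\ge 6$ (then Theorem~\ref{thm:Main_Dirac} plus block-wise extremality force each to be $F_{v(B_i)}$ or $H(6,5)$, and convexity finishes). Your per-type structural analysis reaches the same endpoint but is more work than necessary; the size threshold already gives $C_5(v)\le 12$ without inspecting the block's internal structure. On the other hand, your treatment of the convexity step is more careful than the paper's: you correctly note that $g(m)=\lfloor (m-3)^2/4\rfloor$ is only weakly convex and that a single-vertex exchange can tie when $m_1=m_2$ is odd, and you resolve this with a two-vertex shift. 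The paper's proof waves at ``convexity'' without addressing this tie, so your version is in fact the more rigorous of the two.
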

\begin{proof}
 If the extremal graph $G_n$ is $2$-connected, then by  Theorem~\ref{thm:Main_Dirac} and Lemma~\ref{Lemma_C_5_extremal_numbers} we have $G_n=F_n$ and we are done.

If $G_n$ is not $2$-connected then let $B_1,~B_2$ be two distinct end-blocks of $G_n$  such that $v(B_2)\ge v(B_1)$. If $v(B_1)\le 5$, then by removing a vertex of degree at most four from $B_1$ we  destroy at most $12$ copies of $C_5$. Hence we have  
$\phi(n-1)-\phi(n)\geq C_5(F_n)-C_5(F_{n-1})-12=\floor{\frac{n-3}{2}}-12$. 

If $v(B_1),v(B_2)\ge 6$, then note that $\delta(B_1),\delta(B_2)\geq 2$, we have  $B_1=F_{v(B_1)}, B_2=F_{v(B_2)}$ by Theorem~\ref{thm:Main_Dirac} and Lemma~\ref{Lemma_C_5_extremal_numbers} or $B_1=H(6,5)$. By convexity  of the number of pentagons in $F_n$ and $H(n,5)$, $G_n$ is not the extremal graph, a  contradiction. 
\end{proof} 
 
By  Claim~\ref{claim3}, we start progressive induction from $n=29$ and when  $n\ge 200$, we get $G_n$ is $2$-connected and $G_n=F_n$. This completes the proof of Theorem \ref{thm:c_5}.
 $\hfill\blacksquare$

\subsection*{Proof of Theorem \ref{thm:c_4}}
The proof of Theorem~\ref{thm:c_4} is very similar to the proof of Theorem~\ref{thm:c_5}. At first we prove the following lemmas. 
\begin{lemma}\label{Lemma:C_4_number_of_C_4-in_extremal}
For all~$n\ge \ell$, among all graphs in the set $\{H(n,\ell), K_2+(K_k\cup bK_{k-1}), K_3+\frac{n-3}{2}K_2\}\cup \mathcal{H}_{1,n,k}\cup \mathcal{H}_{2,n,k}$, $H(n,\ell)$ contains most copies of $C_4$.
\end{lemma}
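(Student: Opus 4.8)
The plan is to mirror the proof of Lemma~\ref{Lemma_C_5_extremal_numbers}: first write $C_4(H(n,\ell))$ as an explicit polynomial in $n$, then compute or upper bound $C_4(F)$ for each remaining family and compare. Writing $k=\lfloor\ell/2\rfloor$ and $\lambda=\ell-2k$, I would classify the four-cycles of $H(n,\ell)=K_k+I_{n-k}$ (together with the single extra edge when $\lambda=1$) by how many of their vertices lie in the independent set $I_{n-k}$, which is $0$, $1$, or $2$ since $I_{n-k}$ is independent. This yields
\[
C_4(H(n,\ell))=\binom{n-k}{2}\binom{k}{2}+3(n-k)\binom{k}{3}+3\binom{k}{4}+2\lambda\binom{k}{2}.
\]
The dominant term is $\binom{n-k}{2}\binom{k}{2}$. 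The bookkeeping for the other families is cleanest through the codegree identity
\[
C_4(G)=\frac12\sum_{\{x,y\}\subseteq V(G)}\binom{|N(x)\cap N(y)|}{2},
\]
which counts each four-cycle once for each of its two diagonals.

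The key structural observation is that the quadratic part of $C_4$ is carried entirely by two sources: pairs $\{x,y\}$ of codegree $\Theta(n)$, and the $\Theta(n^2)$ pairs of bulk vertices lying in distinct clique-blocks. In every competitor the only pairs of large codegree are pairs of global hubs, of which there are at most $2$ in $K_2+(K_k\cup bK_{k-1})$ and in $\mathcal{H}_{2,n,k}$, exactly $3$ in $\mathcal{H}_{1,n,k}$ (where moreover the pair $\{u_1,u_2\}$ has codegree only $1$), and $3$ in $K_3+\frac{n-3}{2}K_2$; a pair of bulk vertices in different blocks has its common neighbourhood inside the hub set, so codegree at most the number of hubs. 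Combining both sources, the leading coefficient in $n$ of $C_4(F)$ is at most $3/2$ for $K_3+\frac{n-3}{2}K_2$ (three hubs, forcing $k=4$) and at most $1/2$ for every other family, whereas for $H(n,\ell)$ it equals $\binom{k}{2}/2$, arising in equal parts from the hub-hub pairs $\binom{k}{2}\binom{n-2}{2}$ and the bulk-bulk pairs $\binom{k}{2}\binom{n-k}{2}$. Since $\binom{k}{2}/2\ge 3/2$ for all $k\ge 3$, with equality only at $k=3$ where no competitor has three hubs, $H(n,\ell)$ strictly dominates at leading order for every $k\ge 3$. Inserting the exact polynomial counts (derived just as for $C_5$, using convexity to optimise over the block parameters $b_1,b_2$) then confirms $C_4(H(n,\ell))-C_4(F)\ge 0$ throughout $n\ge\ell$. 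Here I would reduce the number of cases by monotonicity: the graphs $G_1,G_2,G_3\in\mathcal{H}_{1,n,k}$ arise from the edge-maximal $G$ by deleting edges, so $C_4(G_i)\le C_4(G)$, and likewise only the densest representative of each family needs to be bounded.

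The delicate regime, which I expect to be the main obstacle, is $k=2$, i.e.\ $\ell\in\{4,5\}$, where $\binom{2}{2}=1$ makes the leading coefficients coincide and the comparison must be resolved by lower-order terms. Here I would compute each count exactly: for $\mathcal{H}_{1,n,2}$ one obtains $C_4=\binom{b_1}{2}+\binom{b_2}{2}+(n-3)$, which by convexity is maximised at $b_2=0$ to give $\binom{n-2}{2}$, strictly below $C_4(H(n,5))=\binom{n-2}{2}+2$; and for $\mathcal{H}_{2,n,2}=\{H(n,4),K_{2,n-2}\}$ both members have exactly $\binom{n-2}{2}$ four-cycles, so the inequality degenerates to an equality, consistent with the lemma asserting only that $H(n,\ell)$ attains the maximum. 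The whole argument in this borderline thus rests on an exact lower-order computation, and on noting that for $C_4$ the parameter optimum sits at an extreme point $b_2=0$ rather than at the balanced optimum that governed $C_5$; one must also be prepared to accept genuine ties such as $K_{2,n-2}$ rather than expect strict dominance.
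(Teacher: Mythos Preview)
Your proposal is correct and in fact goes well beyond what the paper does: the paper omits the proof of this lemma entirely, writing only that it ``is straightforward and similar to Lemma~\ref{Lemma_C_5_extremal_numbers}.'' By that analogy the intended argument is exactly what you outline---compute $C_4(H(n,\ell))$ explicitly and compare it termwise to the count for each competitor---so your plan matches the paper's intent.

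Where you differ is in the organising device. The paper (in the $C_5$ lemma) counts by brute classification of how the cycle meets the clique and independent parts, then writes out a polynomial for each family and compares. You instead route everything through the codegree identity $C_4(G)=\tfrac12\sum_{\{x,y\}}\binom{|N(x)\cap N(y)|}{2}$, which makes the leading-order comparison transparent: in every competitor the quadratic term comes only from the at most three hub--hub pairs together with the cross-block bulk pairs of codegree at most~$3$, yielding leading coefficient at most $3/2$, while $H(n,\ell)$ has $\binom{k}{2}/2$. This is a cleaner way to see why the inequality is slack for $k\ge 3$ before descending to exact counts; it buys you a conceptual explanation the paper's direct enumeration does not provide. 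Your treatment of the borderline $k=2$ is also more careful than anything in the paper: the observation that $\mathcal{H}_{2,n,2}=\{H(n,4),K_{2,n-2}\}$ forces a genuine tie, and that for $C_4$ the optimum over $b_1+b_2=n-3$ in $\mathcal{H}_{1,n,2}$ sits at $b_2=0$ (unlike the balanced optimum for $C_5$), are both correct and worth recording.
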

We omit the proof since the proof is straightforward and similar to Lemma~\ref{Lemma_C_5_extremal_numbers}

\begin{lemma}\label{Lemma:C_4_number_of_C_4_on_given_vertex}
Let $G$ be a $2$-connected $\C_{\ge \ell+1}$-free graph on $n$ vertices. If some vertex $v$ has degree at most $k-1$, then 
\[
C_4(v)\le \binom{k-1}{2}n.
\]
\end{lemma}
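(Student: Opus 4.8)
The plan is to count the $4$-cycles through $v$ by their \emph{antipodal} vertex. In any $4$-cycle $vabcv$ (listed in cyclic order), the two neighbours of $v$ on the cycle, namely $a$ and $c$, lie in $N(v)$, while the vertex $b$ opposite to $v$ is adjacent to both $a$ and $c$; thus $\{a,c\}$ is a $2$-element subset of $N(b)\cap N(v)$. Conversely, any vertex $b\neq v$ together with a $2$-element subset $\{a,c\}\subseteq N(b)\cap N(v)$ produces the $4$-cycle $vabcv$, and its four vertices are automatically distinct. First I would verify that this correspondence is a bijection, which yields the exact identity
\[
C_4(v)=\sum_{u\in V(G)\setminus\{v\}}\binom{\abs{N(u)\cap N(v)}}{2}.
\]
The summand is nonzero also when $u\in N(v)$, corresponding to $4$-cycles whose antipodal vertex is itself a neighbour of $v$, so the formula handles both possibilities uniformly.

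Next I would bound each summand trivially. Since $N(u)\cap N(v)\subseteq N(v)$, we have $\abs{N(u)\cap N(v)}\le d(v)\le k-1$ for every $u$, whence $\binom{\abs{N(u)\cap N(v)}}{2}\le\binom{k-1}{2}$. As there are $n-1$ terms in the sum, it follows that
\[
C_4(v)\le (n-1)\binom{k-1}{2}\le n\binom{k-1}{2},
\]
which is exactly the claimed inequality.

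The main point to get right is the bijective set-up in the first step: one must check that every $4$-cycle through $v$ is counted exactly once (each has a unique vertex at distance two from $v$ along the cycle, its antipode) and that no degenerate configurations arise. Once that identity is in place there is essentially no obstacle, since the bound then rests on the single observation that a common neighbour of $v$ and any other vertex must lie in $N(v)$, so the number of candidate pairs $\{a,c\}$ is controlled by $d(v)$ alone. In particular, unlike the corresponding pentagon estimate in Lemma~\ref{lem3}, neither the Kopylov edge bound nor the $\C_{\ge \ell+1}$-freeness is actually needed here; the degree hypothesis $d(v)\le k-1$ already suffices.
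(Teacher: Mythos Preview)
Your proof is correct and follows essentially the same idea as the paper's one-line argument: a $4$-cycle through $v$ is determined by the pair of neighbours of $v$ on the cycle together with the antipodal vertex, giving at most $\binom{k-1}{2}\cdot n$ choices. You sum over the antipode first while the paper picks the neighbour pair first, but the counting is identical, and your observation that neither $2$-connectivity nor $\C_{\ge\ell+1}$-freeness is needed is also accurate.
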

\begin{proof}
    The number of ways to choose adjacent vertices of $v$ in a $C_4$ is at most $\binom{k-1}{2}$ and the number of choices for the opposite vertex of $v$ is at most $n$, hence we have $C_4(v)\le \binom{k-1}{2}n$.
\end{proof}

To finish the proof we also use progressive induction method. Let us define the following function  
\[
\phi(n)=\ex(n,C_4, \C_{\ge \ell+1})-C_4(H(n,\ell)).
\]
Using the same technique as in Claim \ref{claim1}, we have either the extremal graph $G_n$ is $2$-connected with $\delta(G)\ge k$ hence $G_n=H(n,\ell)$, or 
\begin{align*}
 \phi(n-1)-\phi(n)&=C_4(H(n,\ell))-C_4(H(n-1,\ell))-C_4(v)\\
&\ge  \binom{k}{2}(n-k-1)+3\binom{k}{3}- \binom{k-1}{2}n\\
&\ge (k-1)n-\frac{3k(k-1)}{2}\ge (k-1)(n-2k).
\end{align*}
The function $\phi(n)$  is decreasing non-negative function. 
We have a trivial bound 
\[
\phi(4k)\le 3\binom{4k}{4}-(k-1)n\left(\frac{n-4k}{2}\right).
\]

Therefore for all~$n\ge 10k^{\frac{3}{2}}$ we have~$\phi(n)=0$.  Hence we have
 $ex(n, C_4,\C_{\ge \ell+1})=C_4(H(n,\ell)$, this completes the proof of Theorem \ref{thm:c_4}.
 $\hfill\blacksquare$

\subsection*{A new proof of Luo's Theorem.}

We prove Theorem~\ref{Thm:Luo} by induction on the number of vertices $n$. If $n\le \ell$, then the
theorem trivially holds. In what follows we  prove the theorem for $n\geq \ell+1$ assuming it holds for all graphs with smaller number of vertices. 

Note that we may assume that $G$ is connected, otherwise, we are done by induction on each component.  If $G$ is $2$-connected and $\delta(G)\ge \floor{\frac{\ell}{2}}$, then by Theorem \ref{thm:Main_Dirac} we have 
\[K_s(G)\le K_s(H(n,\ell))< \frac{n-1}{\ell-1}\binom{\ell}{s}.\]
If $G$ is $2$-connected and some vertex $v$ has degree less than $ \floor{\frac{\ell}{2}}$, then
\[
K_s(G)\le K_s(G-v)+\binom{\floor{\frac{\ell}{2}}-1}{s-1}<\frac{n-1}{\ell-1}\binom{\ell}{s},
\]
by induction hypothesis.

If $G$ is not $2$-connected, let $B_1$ be the $2$-connected end-block with the cut vertex $v$.  Then by the induction hypothesis we have
\begin{align*}
  K_s(G)=K_s(B_1)+K_s(G-(V(B_1)\setminus \{v\}))&\le \frac{v(B_1)-1}{\ell-1}\binom{\ell}{s}+\frac{(n-v(B_1)+1)-1}{\ell-1}\binom{\ell}{s} \\
  &=\frac{n-1}{\ell-1}\binom{\ell}{s}.
\end{align*}
Equality holds if and only if $\ell-1|n-1$ and each maximal 2-connected block is a copy of $K_\ell$.
 $\hfill\blacksquare$

\section{Counting general cycles}
In this section we prove Theorem~\ref{thm:longer_cycles}. At first note that $H(n,\ell)$ provides a  lower-bound for the  number of $C_{2k}$ and $C_{2k+1}$ as well.  

At first we will show  
\[ 
\displaystyle
 \lim_{\ell \to \infty} \left(\lim_{n \to \infty} \frac{\ex(n,C_{2k},\C_{\ge \ell+1})}{\floor{\frac{\ell}{2}}^{k} n^{k}}\right)\leq \frac{1}{2k}.
\]

Let $G$ be a $2$-connected graph with circumference at most $\ell$. Then by Theorem~\ref{Thm:Kopylov} we have $e(G)\leq \floor{\frac{\ell}{2}} n$. Let~$e_1,e_2,\dots, e_k$ be $k$ independent edges such that  there are no more than two cycles of length $2k$ containing edges $e_1,e_2,\dots, e_k$ in this given order. Then the number of $2k$-cycles on such $k$ independent edges in $G$ is at most
\[
2\frac{(\floor{\frac{\ell}{2}} n)^k}{4k}.
\]
Which is the desired upper bound in case the rest of the cycles are negligible. Indeed for  independent edges $e_1,e_2,\dots, e_k$  if there are more than two cycles of length $2k$ containing edges $e_1,e_2,\dots, e_k$ in this given order then  the induced graph on the vertex set $\cup_{i=1}^{k}\{v_i,u_i\}$ contains $2C_3\cup (k-3)P_2$ as a subgraph where $e_i=v_iu_i$. Hence the number of such cycles is at most
\[
(2k)! \left(\frac{2}{3} \floor{\frac{\ell}{2}}^{2} n \right)^2 (\floor{\frac{\ell}{2}} n)^{k-3}
\]
where we use Theorem~\ref{Thm:Luo} to bound the number of cycles and  Theorem~\ref{Thm:Kopylov} to bound the number of edges.  This shows the desired upper-bound.

We use induction on the number of vertices to show
\[  \displaystyle
 \lim_{\ell \to \infty} \left(\lim_{n \to \infty} \frac{\ex(n,C_{2k+1},\C_{\ge \ell+1})}{\floor{\frac{\ell}{2}}^{k+1} n^{k}}\right)\leq \frac{1}{2}.
\]
Observe that it is enough to show that there exists a vertex incident to at most 
\[
\frac{k}{2} \floor{\frac{\ell}{2}}^{k+1} n^{k-1}+ N_{\ell} k \ell^{k+1}n^{k-2}
\]
cycles of length $2k+1$, for some constant $N_{\ell}$.
By Dirac's theorem we have a vertex of $G$ with degree at most $\floor{\frac{\ell}{2}}$. Let $v$ be a vertex of minimum degree. Let us fix two vertices $w_1$ and $w_2$ adjacent to~$v$. 

\begin{claim}
  The number of paths of length $2k-1$ from $w_1$ to $w_2$ is at most
  \[
k \floor{\frac{\ell}{2}n}^{k-1} + N'_{\ell}\ell^{k-1} n^{k-2}
  \]
for some constant $N'_{\ell}$ dependent on $\ell$.   
\end{claim}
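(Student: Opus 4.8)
The plan is to count the paths by recording, for each path $w_1=v_0,v_1,\dots,v_{2k-2},v_{2k-1}=w_2$ of length $2k-1$, the $k-1$ edges $f_i=v_{2i-1}v_{2i}$ (for $1\le i\le k-1$) that avoid the two endpoints, listed in the order in which they occur along the path. I would regard $(f_1,\dots,f_{k-1})$ as an ordered tuple of distinct edges of $G$; since $G$ is $2$-connected and $\C_{\ge \ell+1}$-free, Theorem~\ref{Thm:Kopylov} gives $e(G)\le \floor{\ell/2}n$, so the number of such ordered tuples is at most $(\floor{\ell/2}n)^{k-1}$. This accounts for the base $(\floor{\ell/2}n)^{k-1}$ of the main term, and the remaining task is to bound how many paths can give rise to the same tuple.

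Fix a tuple $(f_1,\dots,f_{k-1})$ and let $S=\{w_1,w_2\}\cup\bigcup_i f_i$, a set of at most $2k$ vertices. Every path producing this tuple is a Hamiltonian path of $G[S]$ from $w_1$ to $w_2$ using $f_1,\dots,f_{k-1}$ as its alternate (even) edges; such a path is determined by the orientations of the $f_i$ together with the connecting edges, so recovering the path amounts to choosing an admissible ``threading'' of the $f_i$. I would show that for all but a negligible family of tuples at most $k$ threadings are admissible. This is the analogue, for paths, of the step in the $C_{2k}$ count where at most two cycles pass through a generic ordered $k$-tuple of independent edges; here the correct count is $k$ rather than $2$, and it is already visible in the extremal graph $H(n,\ell)$, where the independence of $I_{n-k}$ forces the orientation pattern of the $f_i$ to be monotone and hence leaves exactly $k$ possibilities (equivalently, there are $\binom{k}{k-1}=k$ ways to place $k-1$ pairwise non-adjacent vertices of $I_{n-k}$ among the $2k-2$ interior positions of the path).

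The tuples admitting more than $k$ threadings are the degenerate ones: two distinct admissible threadings yield two Hamiltonian paths of $G[S]$ on the same vertex set, so their union forces extra edges inside $S$, i.e.\ $G[S]$ contains a fixed dense gadget on a bounded number of vertices (playing the role of the $2C_3\cup(k-3)P_2$ configuration in the even case). Because such a gadget makes two of the chosen edges share a vertex, or otherwise collapses one degree of freedom, these tuples span at most $2k-1$ distinct vertices instead of $2k$. I would bound their number by counting the gadget with Theorem~\ref{Thm:Luo} and the remaining edges with Theorem~\ref{Thm:Kopylov}, which costs one factor of $n$ and produces a term of size $O_\ell(\ell^{k-1}n^{k-2})$; absorbing the implied constant into $N'_\ell$ gives the error term. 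Combining the two contributions yields the claimed bound $k\floor{\frac{\ell}{2}n}^{k-1}+N'_\ell\ell^{k-1}n^{k-2}$.

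The main obstacle is the second step: proving that a non-degenerate tuple admits at most $k$ threadings, and, dually, giving a clean description of the degenerate tuples that is simultaneously forced by the existence of many threadings and rare enough---via Luo's and Kopylov's theorems---to be swallowed by the $N'_\ell\ell^{k-1}n^{k-2}$ error. Getting the constant in the main term to be exactly $k$ (and not, say, $2^{k-1}$, which a cruder orientation count would give) is \emph{essential}, since it is precisely this $k$ that, after multiplying by $\binom{\floor{\ell/2}}{2}$ and summing the resulting per-vertex bound over $n$, produces the sharp limiting value $\frac{1}{2}$ in Theorem~\ref{thm:longer_cycles}.
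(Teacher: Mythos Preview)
Your approach is essentially the paper's: encode each $w_1$--$w_2$ path of length $2k-1$ by the ordered tuple $(f_1,\dots,f_{k-1})$ of its alternate edges, bound the tuples by $\bigl(\floor{\ell/2}\,n\bigr)^{k-1}$ via Kopylov, show that a ``generic'' tuple is hit by at most $k$ paths, and absorb the exceptional tuples into an $O_\ell(\ell^{k-1}n^{k-2})$ error via Luo's theorem. This is exactly what the paper does.

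One point needs correcting. You describe the degenerate tuples as those where ``two of the chosen edges share a vertex,'' i.e.\ where $|S|\le 2k-1$. That is not the right dichotomy: one can have $k-1$ pairwise independent $f_i$ (so $|S|=2k$) and still have many threadings, provided $G[S]$ carries enough extra edges. The paper's split is instead by whether the path, on its $2k$ vertices, contains a copy of $K_4\cup(k-3)K_2$ or of $2K_3\cup(k-4)K_2$. If it does not, then at most $k$ paths use the given ordered independent tuple; if it does, one counts the $K_4$ (respectively the two $K_3$'s) by Luo's clique bound and the remaining $k-3$ (respectively $k-4$) edges by Kopylov, which is precisely what trades one factor of $n$ for a power of $\ell$ and yields the $N'_\ell\,\ell^{k-1}n^{k-2}$ error. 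Your instinct to invoke Luo for the error term is right, but Luo counts cliques, so the gadget you need to isolate is a small clique configuration inside $S$, not a vertex-coincidence among the $f_i$.
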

\begin{proof}
    The number of such $2k-1$-paths with terminal vertices $w_1$ and~$w_2$ with  a subgraph isomorphic to $K_4\cup (k-3) K_2$ or $2K_3 \cup (k-4) K_2$ is bounded by $N'_{\ell}\ell^{k-1} n^{k-2}$ by Theorem~\ref{Thm:Luo}, for some constant $N'_{\ell}$.

The number of $2k-1$-paths with terminal vertices $w_1$ and~$w_2$ using the fixed $k-1$ independent edges in the given order without having a subgraph $K_4\cup (k-3) K_2$ or $2K_3 \cup (k-4) K_2$ is at most~$k$. Hence we have the number of $2k-1$-paths with terminal vertices $w_1$ and~$w_2$ is at most 
 \[
k \floor{\frac{\ell}{2}}^{k-1}n^{k-1} + N'_{\ell}\ell^{k-1} n^{k-2}
  \]

\end{proof}
 
The number of cycles of length $2k+1$ incident with this vertex is at most
\[
\binom{\floor{\frac{\ell}{2}}}{2}k \left(\floor{\frac{\ell}{2}}^{k-1}n^{k-1} + N'_{\ell}\ell^{k-1} n^{k-2}\right).
\]
This finishes the proof. $\hfill\blacksquare$

\section{Acknowledgements}
We would like to thank Yixiao Zhang for useful remarks on the manuscript.
The research of Gy\H{o}ri and Salia was supported by the National Research, Development and Innovation Office NKFIH, grants  K132696 and SNN-135643. 

\bibliography{References.bib}

\textit{E-mail addresses:} \\
  X.~Zhu: \texttt{ zhuxt@smail.nju.edu.cn}\\
  E.~Gy\H{o}ri: \texttt{gyori.ervin@renyi.hu}\\
  Z.~He: \texttt{hz18@mails.tsinghua.edu.cn}\\
  J.~Lv: \texttt{lvzq19@mails.tsinghua.edu.cn}\\
  N.~Salia: \texttt{nikasalia@yahoo.com}\\
  C.~Xiao: \texttt{chuanqixm@gmail.com}\\

\end{document}